\algrenewcommand{\algorithmiccomment}[1]{$\vartriangleright$ #1}
\algrenewcommand{\algorithmicreturn}{\textbf{Return: }}
\algnewcommand\algorithmicinput{\textbf{Input: }}
\algnewcommand\Input{\State \algorithmicinput}
\newcommand\myshade{50}
\colorlet{mylinkcolor}{blue}
\colorlet{mycitecolor}{green}
\colorlet{myurlcolor}{red}
\renewcommand{\sectionautorefname}{\S\@gobble}%
\renewcommand{\subsectionautorefname}{\S\@gobble}%
\renewcommand{\subsubsectionautorefname}{\S\@gobble}%
\newlength{\offsetpage}
\newenvironment{widepage}{\begin{adjustwidth}{-\offsetpage}{-\offsetpage}%
    \addtolength{\textwidth}{2\offsetpage}}%
{\end{adjustwidth}}
\newtheorem{proposition}{Proposition}
\newtheorem{lemma}{Lemma}
\newtheorem{assumption}{Assumption}
\newcommand{\NN}{\mathbb{N}}
\newcommand{\PP}{\mathbb{P}}
\newcommand{\RR}{\mathbb{R}}
\newcommand{\EE}{\mathbb{E}}
\newcommand{\Cc}{\mathcal{C}}
\newcommand{\Ff}{\mathcal{F}}
\newcommand{\Mm}{\mathcal{M}}
\newcommand{\Nn}{\mathcal{N}}
\newcommand{\Oo}{\mathcal{O}}
\newcommand{\Pp}{\mathcal{P}}
\newcommand{\Ww}{\mathcal{W}}
\newcommand{\Xx}{\mathcal{X}}
\newcommand{\Yy}{\mathcal{Y}}
\newcommand{\KL}{\textrm{KL}}
\newcommand{\Cz}{\bm{C}}
\newcommand{\K}{\bm{K}}
\renewcommand{\d}{\textrm{d}}
\newcommand{\f}{\bm{f}}
\newcommand{\g}{\bm{g}}
\renewcommand{\u}{\bm{u}}
\renewcommand{\v}{\bm{v}}
\newcommand{\al}{\alpha}
\newcommand{\be}{\beta}
\renewcommand{\phi}{\varphi}
\renewcommand{\leq}{\leqslant}
\renewcommand{\geq}{\geqslant}
\renewcommand{\epsilon}{\varepsilon}
\renewcommand{\imath}{\mathrm{i}}
\newcommand{\dotp}[2]{\langle #1,\,#2\rangle}
\newcommand{\norm}[1]{\left\| #1 \right\|}
\newcommand{\eqdef}{\triangleq}
\newcommand{\qandq}{ \quad \text{and} \quad }
\newlength{\restsubwidth}
\newlength{\restsubheight}
\newlength{\restsubmoreheight}
\newcommand{\rest}[2]{%
        \settowidth{\restsubwidth}{\ensuremath{#2}}
        \settoheight{\restsubheight}{\ensuremath{{}_{#2}}}
        \ensuremath{{#1\hskip 0.5pt}_{\vrule\kern2pt\parbox[b][%
        4pt][b]{\the\restsubwidth}{%
                        \ensuremath{{}_{#2}}}}}
        }
\newcommand{\enscond}[2]{ \left\{ #1 \;:\; #2 \right\} }
\newcommand{\Ctrans}[2]{T_{#2}(#1)}
\newcommand{\var}{\text{\upshape var}}
\def\icml{0}
\begin{document}

\if\icml1
\input{icml_heading.tex}
\else
\maketitle
\fi

\begin{abstract}
  Optimal Transport (OT) distances are now routinely used as loss functions in ML tasks. Yet, computing OT distances between arbitrary (i.e. not necessarily discrete) probability distributions remains an open problem. This paper introduces a new online estimator of entropy-regularized OT distances between two such arbitrary distributions. It uses streams of samples from both distributions to iteratively enrich a non-parametric representation of the transportation plan. Compared to the classic Sinkhorn algorithm, our method leverages new samples at each iteration, which enables a consistent estimation of the true regularized OT distance. We provide a theoretical analysis of the convergence of the online Sinkhorn algorithm, showing a nearly-$\Oo(\frac{1}{n})$ asymptotic sample complexity for the iterate sequence. We validate our method on synthetic 1D to 10D data and on real 3D shape data.
\end{abstract}

\if\icml0
\medskip
\fi

%!TEX root = article.tex

Optimal transport (OT) distances are fundamental in statistical learning, both
as a tool for analyzing the convergence of various
algorithms~\citep{canas2012learning,dalalyan2019user}, and as a data-dependent
term for tasks as diverse as supervised learning~\citep{frogner2015learning},
unsupervised generative modeling~\citep{arjovsky2017wgan} or domain
adaptation~\citep{courty2016optimal}.
OT lifts a distance over data points living in a space $\Xx$ into a distance
on the space $\Pp(\Xx)$ of probability distributions over the space $\Xx$.
% 
%  We refer to the monograph of~\citet{santambrogio2015optimal} for a detailed
%  mathematical treatment.
%
This distance has many favorable geometrical properties. In particular it allows one to compare distributions having disjoint supports. 
Computing OT distances is usually performed by sampling once from the input
distributions and solving a discrete linear program (LP), due
to~\citet{Kantorovich42}. This approach is numerically costly and statistically
inefficient \citep{weed2019sharp}. Furthermore, the optimisation problem depends on a fixed
sampling of points from the data. It is therefore not adapted to machine
learning settings where data is resampled continuously (e.g. in GANs), or
accessed in an online manner. In this paper, we develop an efficient
online method able to estimate OT distances between continuous distributions. It uses 
a stream of data to refine an approximate OT solution,
adapting the regularized OT approach to an online setting.

%%%%
To alleviate both the computational and statistical burdens of OT, it is common
to regularize the Kantorovich LP.
The most successful approach in this direction is to use an entropic barrier penalty. 
When dealing with discrete distributions, this yields a problem that can be solved
numerically using Sinkhorn-Knopp's matrix balancing
algorithm~\citep{Sinkhorn64,sinkhorn1967concerning}.
This approach was pushed forward for ML applications by
\citet{cuturi2013sinkhorn}. Sinkhorn distances are smooth and amenable to GPU
computations, which make them suitable as a loss function in model training \citep{frogner2015learning, mensch_geometric_2019}.
The Sinkhorn algorithm operates in two distinct phases: draw samples from the
distributions and evaluate a pairwise distance matrix in the first phase;
balance this matrix using Sinkhorn-Knopp iterations in the second
phase.

This two-step approach does not estimate the true regularized OT
distance, and cannot handle samples provided as a stream, e.g. renewed at each
training iteration of an outer algorithm. A cheap fix is to use Sinkhorn over
mini-batches (see for instance~\citet{2018-Genevay-aistats} for an application
to generative modelling). Yet this introduces a strong estimation bias, especially in high
dimension ---see~\citet{fatras2019learning} for a mathematical analysis. In
contrast, we use streams of mini-batches to progressively enrich a consistent representation of the
transport plan.

\paragraph{Contributions.} Our paper proposes a new take on estimating optimal transport distances between continuous distributions. We make the following contributions:
\begin{itemize}[topsep=0pt, partopsep=0pt]
    \item We introduce an online variant of the Sinkhorn algorithm, that relies on
    streams of samples to enrich a non-parametric
    functional representation of the dual regularized OT solution.
    \item We establish the almost sure convergence of online Sinkhorn and derive asymptotic convergence rates 
    (\autoref{prop:convergence_true} and \ref{prop:rate}). We provide
    convergence results for variants.
    \item We demonstrate the performance of online Sinkhorn for estimating OT
    distances between continuous distributions and for accelerating the early
    phase of discrete Sinkhorn iterations.
\end{itemize}
\paragraph{Notations.} We denote $\Cc(\Xx)$ [$\Cc_+(\Xx)$] the set of [strictly
positive] continuous functions over a metric space $\Xx$, $\Mm^+(\Xx)$ and
$\Pp(\Xx)$ the set of positive and probability measures on $\Xx$, respectively.
% We write $(i, j]$ the sequence $[i+1, \dots,
% j]$.

\section{Related work}\label{sec:related}

\paragraph{Sinkhorn properties.} The Sinkhorn algorithm computes $\epsilon$-accurate
approximations of OT in $O(n^2/\epsilon^3)$ operations for $n$
samples~\citep{altschuler2017near} (in contrast with the $\Oo(n^3)$ complexity of exact OT \cite{goldberg1989finding}). Moreover, Sinkhorn distances suffer less from the
curse of dimensionality~\citep{2019-Genevay-aistats}, since the average error
using $n$ samples decays like $\Oo(\epsilon^{-d/2}/\sqrt{n})$ in dimension
$d$, in contrast with the slow $\Oo(1/n^{1/d})$ error decay of
OT~\citep{dudley_speed_1969,weed2019sharp}. Sinkhorn distances can further be sharpened
by entropic debiasing~\citep{2019-Feydy-aistats}. Our work is orthogonal, as we focus on estimating distances between continuous distributions.

\paragraph{Continuous optimal transport.} Extending OT computations to arbitrary
distributions (possibly having continuous densities) without relying on a fixed
a priori sampling is an emerging topic of interest. A special case is the
semi-discrete setting, where one of the two distributions is discrete. Without
regularization, over an Euclidean space, this can be solved efficiently using
the computation of Voronoi-like diagrams~\citep{merigot2011multiscale}. This
idea can be extended to entropic-regularized OT~\citep{cuturi2018semidual}, and
can also be coupled with stochastic optimization
methods~\citep{2016-genevay-nips} to tackle high=dimensional problems (see
 \cite{staib2017parallel} for an extension to Wasserstein barycenters). When dealing with arbitrary continuous densities, that are accessed through a
stream of random samples, the challenge is to approximate  the (continuous) dual
variables of the regularized Kantorovich LP using parametric or non-parametric
classes of functions. For application to generative model fitting, one can use
deep networks, which leads to an alternative formulation of Generative
Adversarial Networks (GANs)~\citep{arjovsky2017wgan} (see
also~\citet{seguy2018large} for an extension to the estimation of transportation
maps). There is however no theoretical guarantees for this type of dual
approximations, due to the non-convexity of the resulting optimization problem.
To our knowledge, the only mathematically rigorous algorithm represents potentials in reproducing
Hilbert space~\citep{2016-genevay-nips}. This
approach is generic and does not leverage the specific structure of the OT problem, so that in practice its convergence is slow. We show in Section~\autoref{sec:compare} that online Sinkhorn finds better potential estimates than SGD on RKHS representations.

\paragraph{Stochastic approximation (SA).} Our approach may be seen as SA \citep{robbins1951stochastic} for finding the roots of an
operator in a non-Hilbertian functional space. \cite{alber_stochastic_2012} studies
SA for solving fixed-points that are contractant in Hilbert spaces. Online Sinkhorn convergence relies on the contractivity of a certain operator in a non-Hilbertian metric, and requires a specific analysis. 
As both are SA instances, the online Sinkhorn algorithm resembles stochastic EM \citep{celeux_stochastic_1992}, though it cannot be interpreted as such.

%!TEX root = article.tex
\section{Background: optimal transport distances}

We first recall the definition of optimal transport distances between arbitrary distributions (i.e. not necessarily discrete), then review how these are estimated using a finite number of samples.

\subsection{Optimal transport distances and algorithms}

\paragraph{Wasserstein distances.} 

We consider a complete metric space $(\Xx,d)$ (assumed to be compact for simplicity), equipped
with a continuous cost function $(x,y) \in \Xx^2 \to C(x, y) \in \RR$ for any $(x,y) \in \Xx^2$ (assumed to be symmetric also for simplicity). 
Optimal transport lifts this \textit{ground cost} into a cost between probability
distributions over the space $\Xx$. 
The Wasserstein cost between two probability distributions $(\alpha, \beta) \in \Pp(\Xx)^2$ is defined as the minimal cost required to move each element of mass of $\alpha$ to each element of mass of $\beta$. It rewrites as the solution of a
linear problem (LP) over the set of transportation plans (which are probability distribution $\pi$ over $\Xx \times \Xx$):
\begin{equation}\label{eq:wass_0}
    \Ww_{C,0}(\alpha, \beta) \eqdef 
    \min_{\pi \in \Pp(\Xx^2)}
    \enscond{
    	\dotp{C}{\pi}
	}{ \pi_1=\al, \pi_2=\be},
\end{equation}
where we denote $\dotp{C}{\pi} \eqdef \int C(x,y) \d\pi(x,y)$. Here, $\pi_1 =
\int_{y\in \Xx} \d \pi(\cdot, y)$ and $\pi_2 = \int_{x\in \Xx} \d \pi(x, \cdot)$
are the first and second marginals of the transportation plan $\pi$. We refer to
\cite{santambrogio2015optimal} for a review on OT.
%
% When $C=d^p(x,y)$ is the $p^{\text{th}}$ power of the ground distance, with $p
% \geq 1$, then $\Ww_{C,0}^{1/p}$ is itself a distance over $\Pp(\Xx)$, whose
% associated topology is the one of the convergence in
% law~.

\paragraph{Entropic regularization and Sinkhorn algorithm.} 

The solutions of~\eqref{eq:wass} can be approximated by a strictly convex optimisation problem, where an entropic term is added to the linear objective to force strict convexity. The so-called Sinkhorn cost is then
\begin{equation}\label{eq:wass}
    \Ww_{C,\varepsilon}(\alpha, \beta) \eqdef 
    \min_{
    \substack{
        \pi \in \Pp(\Xx \times \Xx)
        \\\pi_1 = \alpha, \pi_2 = \beta
    }    
    } \dotp{C}{\pi} + \varepsilon \KL(\pi | \alpha \otimes \beta),
\end{equation}
where the Kulback-Leibler divergence is defined as $\KL(\pi | \alpha \otimes
\beta) \eqdef \int \log(\frac{\d\pi}{\d\al\d\be}) \d\pi$ (which is thus equal to
the mutual information of $\pi$).
$\Ww_{C,\varepsilon}$ approximates $\Ww_{C,0}(\alpha,\beta)$ up to an $\epsilon
\log(\epsilon)$ error \citep{2019-Genevay-aistats}. In the following, we set
$\varepsilon$ to $1$ without loss of generality, as $\Ww_{C, \varepsilon} =
\epsilon \Ww_{C / \varepsilon, 1}$, and simply write $\Ww$.
\eqref{eq:wass} admits a dual form, which is a maximization problem over the space of continuous functions:
\begin{equation}\label{eq:dual}
    F_{\alpha, \beta}(f, g) \eqdef \max_{(f, g) \in \Cc(\Xx)^2} \dotp{f}{\alpha} + 
    \dotp{g}{\beta}
    - \dotp{e^{f \oplus g - C}}{\alpha \otimes \beta} + 1, 
\end{equation}
where $\dotp{f}{\alpha} \eqdef \int f(x) \d\al(x)$ and $(f \oplus g - C)(x,y)
\eqdef f(x)+g(y)-C(x,y)$.
%  We write $F_{\alpha, \beta}(f, g)$ the dual objective. 
Problem \eqref{eq:dual} can be solved by closed-form alternated maximization, which corresponds to Sinkhorn's algorithm. At iteration $t$, the updates are simply
\begin{align}
    f_{t+1}(\cdot) &= \Ctrans{g_t}{\beta}, \quad
    g_{t+1}(\cdot) = \Ctrans{f_{t+1}}{\alpha},\\
    \Ctrans{h}{\mu} &\triangleq 
    - \log \int_{y \in \Xx}\!\! \exp(h(y) - C(\cdot, y))\d \mu(y).\label{eq:sinkhorn}
\end{align}
The operation $h \mapsto \Ctrans{h}{\mu}$  maps a continuous function to another
continuous function, and is a smooth approximation of the celebrated
$C$-transform of OT~\citep{santambrogio2015optimal}. We thus refer to it as a
\textit{soft C-transform}. Note that we consider \textit{simultaneous} updates
of $f_t$ and $g_t$ in this paper, as it simplifies our analysis.
The notation $f_t(\cdot)$ emphasizes the fact that $f_t$ and $g_t$ are
\textit{functions}. 
%
% In the discrete setting, iterations 

It can be shown that ${(f_t)}_t$ and ${(g_t)}_t$ converge in $(\Cc(\Xx),
\norm{\cdot}_{\text{var}})$ to a solution $(f^\star, g^\star)$ of
\eqref{eq:dual}, where $\norm{f}_{\text{var}} \eqdef \max_x f(x) - \min_x f(x)$
is the so-called variation norm. Functions endowed with this norm are only
considered up to an additive constant.  Global convergence is due to the strict
contraction of the operators $\Ctrans{\cdot}{\beta}$ and
$\Ctrans{\cdot}{\alpha}$ in the space $(\Cc(\Xx), \norm{\cdot}_{\text{var}})$
\citep{lemmens_nonlinear_2012}.

\subsection{Estimating OT distances with realizations}

When the input distributions are discrete (or equivalently when $\Xx$ is a
finite set), i.e. $\alpha = \frac{1}{n}\sum_{i=1}^n \delta_{x_i}$ and $\beta =
\frac{1}{n} \sum_{i=1}^n \delta_{y_i}$, the function $f_t$ and $g_t$ need only
to be evaluated on $(x_i)_t$ and $(y_i)_i$, which allows a proper
implementation. The iterations~\eqref{eq:sinkhorn} then correspond to the
\citet{sinkhorn1967concerning} algorithm over the inverse scaling vectors $\u_t
\triangleq {(e^{-f_t(x_i)})}_{i=1}^n, \v_t \triangleq
{(e^{-g_t(y_i)})}_{i=1}^n$:
\begin{equation}\label{eq-sinkhorn}
	\u_{t+1} = \K \frac{1}{n \v_t}
	\qandq
	\v_{t+1} = \K^\top \frac{1}{n \u_t}
\end{equation}
where $\K=(e^{-C(x_i,y_i)})_{i,j=1}^n \in \RR^{n \times n}$, and inversion is made pointwise. The Sinkhorn algorithm for OT thus
operates in two phases: first, the kernel matrix $\K$ is computed, with a cost in
$O(n^2 d)$, where $d$ is the dimension of $\Xx$; then each
iteration~\eqref{eq-sinkhorn} costs $O(n^2)$. The online Sinkhorn algorithm that we propose mixes
these two phases to accelerate convergence (see results in \autoref{sec:accelerating}).

% The goal of this paper is to go beyond this discrete setting, and handle generic distributions (possibly having continuous densities). In particular, our numerical scheme manipulates continuous functions though an adapted parameterization which is automatically refined during the iterations.

%%%%%%%%%%%%%%%%%%%%%%%%%%%%%%%%%%%%%%%%%%%%%%%%%%%%%%%%%%%%%%%%
\paragraph{Consistency and bias.}\label{sec:gradient}

The OT distance $\Ww_{C,0}(\alpha,\beta)$ and its regularized version
$\Ww_{C,\epsilon}(\alpha,\beta)$ can be approximated by the (computable)
distance between discrete realizations $\hat \alpha = \frac{1}{n} \sum_i
\delta_{x_i}$, $\hat \beta = \frac{1}{n} \sum_i \delta_{y_i}$, where ${(x_i)}_i$
and ${(y_i)}_i$ are i.i.d samples from $\alpha$ and $\beta$.  Consistency holds,
as $\Ww(\hat \alpha_n, \hat \beta_n) \to \Ww(\alpha,
\beta)$. Although this is a reassuring result, the sample complexity of
transport in high dimensions with low regularization remains high (see
\autoref{sec:related}).
%  For computational reasons, we cannot choose $n$ to be
% much more than $10^5$. 

The estimation of $\Ww(\alpha,\beta)$ may be improved using several i.i.d sets
of samples $(\hat \alpha_t)_t$ and ${(\hat \beta_t)}_t$. Those should be of
reasonable size to fit in memory and may for example come from a temporal
stream. \cite{2018-Genevay-aistats} use a Monte-Carlo estimate $\hat \Ww(\alpha,
\beta) = \frac{1}{T} \sum_{t=1}^T \Ww(\hat \alpha_t, \hat \beta_t)$. However,
this yields a biased estimation as the distance $\Ww(\alpha, \beta)$ and the
optimal potentials $f^\star=f^\star(\alpha, \beta)$ differ from their
expectation under sampling $\EE_{\hat \alpha \sim \alpha, \hat \beta \sim \beta}
[\Ww(\hat \alpha, \hat \beta)]$ and $\EE_{\hat \alpha \sim \alpha, \hat \beta
\sim \beta}[f^\star(\hat \alpha, \hat \beta)]$. In contrast, online Sinkhorn
consistently estimates the true potential functions (up to a constant) and the
Sinkhorn cost.

%!TEX root = article.tex

\section{OT distances from sample streams}

We now introduce an online adaptation of the Sinkhorn algorithm. We construct functional estimators of $f^\star$, $g^\star$
and $\Ww(\alpha,\beta)$ using successive discrete distributions of samples ${(\hat \alpha_t)}_t$
and ${(\hat \beta_t)}_t$, where $\hat\alpha_t \triangleq \frac{1}{n} \sum_{i=n_t +
1}^{n_{t+1}} \delta_{x_i}$, with  $n_0 \triangleq 0$ and $n_{t+1} \triangleq n_{t} +
n$. The size of the mini-batch $n$ may potentially depends on $t$.
% ---in
% particular, $n$ must increases slightly to ensure exact convergence (see
% \autoref{prop:convergence_true}).  
${(\hat \alpha_t)}_t$ and ${(\hat \beta_t)}_t$ may be
seen as mini-batches of size $n$ within a training procedure.
%
% Our estimator progressively enriches a representation of the
% solution of~\eqref{eq:dual}, that may be arbitrary complex. 
% 
% We detail an intuitive construction of our algorithm in
% \autoref{sec-online-sink-iter}, formalize it in \autoref{sec:alg} before casting
% it as a block-convex stochastic mirror descent in \autoref{sec-mirror}.

%%%%%%%%%%%%%%%%%%%%%%%%%%%%%%%%%%%%%%%%%%%%%%%%%%%%%%%%
\subsection{Online Sinkhorn iterations}
\label{sec-online-sink-iter}

The optimization trajectory ${(f_t, g_t)}_t$ of the continuous Sinkhorn
algorithm given by \eqref{eq:sinkhorn} is untractable as it cannot be
represented in memory. The exp-potentials $u_t \triangleq \exp(- f_t)$ and $v_t
\triangleq \exp(-g_t)$ are indeed infinitesimal mixtures of kernel functions
$\kappa_y(\cdot) \eqdef \exp(-C(\cdot, y))$ and $\kappa_x(\cdot) \eqdef \exp(-C(x, \cdot))$. 

We propose to construct finite-memory consistent estimates of $u_t$ and $v_t$ using
principles from stochastic approximation (SA) \cite{robbins1951stochastic}. We cast
the regularized OT problem as a root-finding problem of a function-valued operator $\Ff: \Cc_{+}(\Xx) \times \Cc_{+}(\Xx) \to \Cc(\Xx) \times \Cc(\Xx)$,
for which we can obtained unbiased estimates. Optimal potentials are indeed
exactly the roots of
\begin{equation}
    \Ff: (u, v) \to 
    \Big(u(\cdot) - \int_{y \in \Xx}
     \frac{1}{v(y)} \kappa_y(\cdot)  \d \beta(y),\quad
    v(\cdot) - \int_{x \in \Xx}
    \frac{1}{u(x)}  \kappa_x(\cdot)  \d \alpha(x)\Big).
\end{equation}
In particular, the simultaneous Sinkhorn updates rewrites as $(u_{t+1}, v_{t+1}) = (u_{t},
v_{t}) - \Ff(u_t, v_t)$ for all $t$. Importantly, $\Ff$ can be
evaluated without bias using two empirical measures $\hat \alpha$ and
$\hat \beta$, defining
\begin{equation}
    \hat \Ff_{\hat \alpha, \hat \beta}(u, v) \triangleq 
    \Big(u(\cdot) - \frac{1}{n} \sum_{i=1}^n
    \frac{1}{v(y_i)} \kappa_{y_i}(\cdot)\quad
    v(\cdot) - \frac{1}{n} \sum_{i=1}^n
   \frac{1}{u(x_i)}  \kappa_{x_i}(\cdot) \Big).
\end{equation}
By construction, $\EE_{\hat \alpha \sim \alpha, \hat \beta \sim \beta} [\hat \Ff_{\hat
\alpha, \hat \beta}] = \Ff$, and the images of $\hat \Ff$ admit a representation in memory.

\paragraph{Randomized Sinkhorn.}To make use of a stream of samples $(\hat \alpha_t, \hat
 \beta_t)_t$, we may simply replace $\Ff$ with $\hat \Ff$ in the Sinkhorn updates.
 This amounts to use noisy soft $C$-transforms in \eqref{eq:sinkhorn}, as we set
\begin{align}\label{eq:updates_naive}
    (u_{t+1}, v_{t+1}) &\triangleq (u_{t},
    v_{t}) - \hat \Ff_{\hat \alpha, \hat \beta}(u_t, v_t),\quad\text{i.e.}\\
     \hat f_{t+1} &= \Ctrans{\hat g_t}{\hat \beta_t},
    \qquad \hat g_{t+1} = \Ctrans{\hat f_{t+1}}{\hat \alpha_t}.
\end{align}
% which is equivalent to setting all $(q_i)_{i \leq n_t}$ to $0$, and assigning each weight
%  $q_i$ to $g_t(y_i) - \log(n)$ for $n_t < i \leq  n_{t+1}$, and similarly for~$p_i$.
%
$\hat f_t$ and $\hat g_t$ are defined in memory by $(y_i, \hat g_{t-1}(y_i))_i$ and $(x_i,
\hat f_{t-1}(x_i))_i$. Yet the variance of the updates~\eqref{eq:updates_naive} does not
decay through time, hence this \textit{randomized Sinkhorn} algorithm does not
converge. However, we show in
\autoref{prop:markov} that the Markov chain ${(\hat f_t, \hat g_t)}_t$ converges
towards a stationary distribution that is independent of the potentials $\hat f_0$ and $\hat g_0$ used for
initialization.

%%%%%
\paragraph{Online Sinkhorn.}

To ensure convergence of $\hat f_t$, $\hat g_t$ towards some optimal pair of potentials
$(f^\star, g^\star)$, one must take more cautious steps, in particular past iterates should not be discarded. 
We introduce a learning
rate $\eta_t$ in Sinkhorn iterations, akin to the Robbins-Monro algorithm for finding
roots of vector-valued functions:
\begin{align}\label{eq:updates}
    (\hat u_{t+1}, \hat v_{t+1}) &\triangleq (1 - \eta_t) (\hat u_t, \hat v_t) 
    - \eta_t \hat \Ff_{\hat \alpha_t, \hat \beta_t}(\hat u_t, \hat v_t),\quad\text{i.e.}\\
    e^{-\hat f_{t+1}} &= (1 - \eta_t) e^{-\hat f_{t}} + 
    \eta_t e^{-\Ctrans{\hat g_t}{\hat \beta_t}}
\end{align}
Each update adds new kernel functions to a non-parametric estimation of $
u_t$ and $v_t$. The estimates $\hat u_t$ and $\hat v_t$ are defined by weights ${(p_{i,t},
q_{i,t})}_{i \leq n_t}$ and positions ${(x_i, y_i)}_{i \leq n_t} \subseteq~\Xx^2$:
\begin{align}\label{eq:param}
    e^{-\hat f_t(\cdot)} &= \hat u_t(\cdot) \triangleq \sum_{i=1}^{n_t} 
    \exp(q_{i,t} - C(\cdot, y_i)),\\
    e^{-\hat g_t(\cdot)} &= \hat v_t(\cdot) \triangleq \sum_{i=1}^{n_t}
    \exp(p_{i,t} - C(x_i, \cdot)).
\end{align}
The SA updates \eqref{eq:updates} yields simple vectorized updates for the weights
${(p_i,q_i)}_i$, leading to \autoref{alg:online_sinkhorn}. We perform
the updates for $q_i$ and $p_i$ in log-space, for numerical stability reasons.

\paragraph{Complexity.}

Each iteration of online Sinkhorn has complexity $\Oo(n_t\,n)$, due to the evaluation of the
distances $C(x_i, y_i)$ for all $(x_i)_{(0, n_t]}$ and $(y_i)_{(n_t, n_{t+1}]}$,
and the soft $C$-transforms in \eqref{eq:param}. Online Sinkhorn computes a
distance matrix $(C(x_i,y_j))_{i,j \leq n_t}$ on the fly, in parallel to
updating~$\hat f_t$ and~$\hat g_t$. In total, its computation cost after drawing
$n_t$ samples is $\Oo(n_t^2)$. Its memory cost is $\Oo(n_t)$; it increases with
iterations, which is a requirement for consistent estimation. Randomized
Sinkhorn with constant batch-sizes $n$ has a memory cost of $\Oo(n)$ and a
single-iteration computational cost of $\Oo(n^2)$.

\begin{algorithm}[t]
    \begin{algorithmic}
    \State \textbf{Input:} Dist. $\alpha$ and $\beta$, learning weights ${(\eta_t)}_t$, batch sizes ${(n(t))}_t$
    \textbf{Set} $p_i = q_i = 0$ for $i \in (0, n_1]$
    \For{$t= 0, \dots, {T-1}$}
        \State Sample $(x_i)_{(n_t, n_{t+1}]} \sim \alpha$, $(y_j)_{(n_t, n_{t+1}]} \sim \beta$.
            \State Evaluate $(\hat f_t(x_i))_{i=(n_t, n_{t+1}]}$,
             $(\hat g_t(y_i))_{i=(n_t, n_{t+1}]}$ using $(q_{i,t}, p_{i,t}, x_i, y_i)_{i=(0,n_{t}]}$ in \eqref{eq:param}.
             \State $q_{(n_t, n_{t+1}],t+1} {\gets} \log \frac{\eta_t}{n}
             + (\hat g_t(y_i))_{(n_t, n_{t+1}]}$,
             \qquad $p_{(n_t, n_{t+1}],t+1} {\gets} \log \frac{\eta_t}{n} 
             + (\hat f_t(x_i))_{(n_t, n_{t+1}]}$.
            \State $q_{(0, n_t],t+1} \gets q_{(0, n_t],t} + \log(1 - \eta_t)$, \qquad
            $p_{(0, n_t],t+1} \gets p_{(0, n_t],t} + \log(1 - \eta_t)$.
        % \State Persists $\hat f_{t+1} : (q_{i,t+1}, y_i)_{(0, n_{t+1}]}$ and
        % $\hat g_{t+1} : (p_{i,t+1}, x_i)_{(0, n_{t+1}]}$
        % \State \textit{Optional}: refit all $q_i = g_t(y_i) - \log (n_{t+1})$ \quad
        %  $p_i = f_t(x_i) - \log (n_{t+1})$
    \EndFor
    \State \textbf{Returns:} $\hat f_T : (q_{i,T}, y_i)_{(0, n_T]}$ and
    $\hat g_T : (p_{i,T}, x_i)_{(0, n_T]}$
    \end{algorithmic}
    % \vspace{-.4em}
    \caption{Online Sinkhorn}\label{alg:online_sinkhorn}
\end{algorithm}

%%%%%%%%%%%%%%%%%%%%%%%%%%%%%%%%%%%%%%%%%%%%%%%%%%%%%%%%
\subsection{Refinements}

\paragraph{Estimating Sinkhorn distance.} 

As we will see in \autoref{sec:analysis}, the iterations \eqref{eq:updates} only estimate potential functions up to a
constant. This is sufficient for minimizing a loss function involving a Sinkhorn
distance (e.g. for model training or barycenter estimation \citep{staib2017parallel}), as backpropagating through the Sinkhorn distance
relies only on the gradients of the potentials $\nabla_x f^\star(\cdot)$,
$\nabla_y g^\star(\cdot)$ \citep[e.g.][]{cuturi2018semidual}. With extra
$\Oo(n_t^2)$ operations, $(\hat f_t, \hat g_t)$ may be used to
estimate $\Ww(\alpha,\beta)$ through a final soft $C$-transform:
\begin{equation}\label{eq-dist-est}
    \hat \Ww_t \triangleq \frac{1}{2}\Big(\dotp{\bar \alpha_t}{f_t + 
    \Ctrans{\hat g_t}{\bar \alpha_t}}
     {+} \dotp{\bar \beta_t}{\hat g_t {+} \Ctrans{f_t}{\bar \alpha_t}}\Big),
\end{equation}
where $\bar \alpha_t \eqdef \frac{1}{n_{t}}\sum_{i=1}^{n_{t}} \delta_{x_i}$
and $\bar \beta_t$ are formed of all previously observed samples.

%%%%%
\paragraph{Fully-corrective scheme.} 

The potentials $\hat f_t$ and $\hat g_t$ may be improved by refitting the
weights $(p_i)_{(0, n_t]}$, $(q_j)_{(0, n_t]}$ based on all previously seen
samples.  For this, we update $\hat f_{t+1} = \Ctrans{g_t}{\bar \beta_t}$ and
$\hat g_{t+1} = \Ctrans{f_t}{\bar \alpha_t}$. This reweighted scheme (akin to
the fully-corrective Frank-Wolfe scheme from \cite{lacoste2015global}) has a
cost of $\Oo(n_t^2)$ per iteration. It requires to keep in memory (or recompute
on-the-fly) the whole distance matrix. Fully-corrective online Sinhorn  enjoys
similar convergence properties as regular online Sinkhorn, and permits the use
of non-increasing batch-sizes---see
\autoref{app:fully_corrective}. In practice, it can be used every $k$
iterations, with $k$ increasing with $t$. Combining partial and full updates can
accelerate the estimation of Sinkhorn distances (see \autoref{sec:accelerating}).

%%%%%
% \paragraph{Memory compression.} 

% The memory requirement in $\Oo(n_t)$ is an
% avoidable limitation of the algorithm, as the optimal
% potentials $(f^\star, g^\star)$ do not admit a parametric representation in
% general. However, we may compress the representations $(q_j, y_j)$ and $(x_i,
% p_i)_i$ using $k$-means clustering over $M$ centroids. The
% sampled points $(x_i)_i$ and $(y_j)_j$ are attached to centroids ${(X_I)}_{I \in
% (0,M_t]}$ and ${(Y_J)}_{J \in (0,M_t]}$. For all $I \in (0, M_t]$, we set
% weights and potentials as
% \begin{align}
%     Q_J &\gets - \log \sum_{\substack{j,\:y_j \text{ closest}\\\text{to } \bar Y_J}}
%      \exp(-q_j),\\
%     f_t(\cdot) &\gets - \log\sum_{J=1}^{M_t} \exp(Q_J - C(\cdot, \bar Y_J)),
% \end{align}
% and similarly for $(p_I)_I$ and $g_t$. Once again, this operation should be made
% once every $k$ iterations. $M_t$ can for instance be set constant after linearly
% increasing in a first stage. This heuristic is important for applications but
% requires significant engineering: we leave it for future work.

\paragraph{Finite samples.}Finally, we note that our algorithm
can handle both continuous or discrete distributions. When $\alpha$ and $\beta$
are discrete distributions of size $N$, we can store $p$ and $q$ as fixed-size
vectors of size~$N$, and update at each iterations a set of coordinates of size $n < N$. The resulting
algorithm is a \textit{subsampled} Sinkhorn algorithm for histograms, which is
detailed in \autoref{sec:sinkhorn_discrete}, \autoref{alg:discrete_online}. We show in \autoref{sec:exps} that it
is useful to accelerate the first phase of the Sinkhorn algorithm.

%!TEX root = article.tex

\section{Convergence analysis}\label{sec:analysis}

We show a
stationary distribution convergence property for the randomized Sinkhorn algorithm,
 an approximate convergence property for the online Sinkhorn algorithm with
fixed batch-size and an exact convergence result for online Sinkhorn with
increasing batch sizes, with asymptotic convergence rates. We make the following classical assumption on the cost regularity and compactness of $\alpha$ and $\beta$.

\begin{assumption}\label{ass:lip}
    The cost $C: \Xx \times \Xx \to \RR$ is $L$-Lipschitz, and $\Xx$ is  compact.
\end{assumption}

\subsection{Randomized Sinkhorn}

We first state a result concerning the randomized Sinkhorn algorithm~\eqref{eq:updates_naive}, proved in \autoref{sec:proof_markov}.

\begin{proposition}\label{prop:markov}
    Under \autoref{ass:lip}, the randomized Sinkhorn algorithm \eqref{eq:updates_naive} yields a time-homogeneous
    Markov chain ${(\hat f_t, \hat g_t)}_t$ which is $(\hat \alpha_s, \hat \beta_s)_{s \leq
    t}$ measurable, and converges in law towards a stationary distribution
    $(f_\infty, g_\infty) \in \Pp(\Cc(\Xx)^2)$ independent of the initialization
    point $(f_0, g_0)$.
\end{proposition}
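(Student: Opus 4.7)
The plan is to treat \eqref{eq:updates_naive} as an iterated random function on the space of continuous potentials, and combine a contraction estimate for the soft $C$-transform with compactness to conclude via the standard iterated random maps argument. The first observation is that each update can be written
\begin{equation*}
(\hat f_{t+1}, \hat g_{t+1}) = \Phi\bigl(\hat f_t, \hat g_t;\, \hat\alpha_t,\hat\beta_t\bigr),
\end{equation*}
where $\Phi$ is a fixed deterministic map involving only soft $C$-transforms and the random empirical measures $(\hat\alpha_t,\hat\beta_t)$ are i.i.d. This immediately gives the time-homogeneous Markov property and, by induction on $t$, the $(\hat\alpha_s,\hat\beta_s)_{s\leq t}$-measurability.

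The key analytic ingredient is a strict contraction estimate for the soft $C$-transform in the variation seminorm: for any probability $\mu$ on $\Xx$ and any $h,h'\in\Cc(\Xx)$,
\begin{equation*}
\norm{\Ctrans{h}{\mu} - \Ctrans{h'}{\mu}}_{\text{var}} \leq \lambda\,\norm{h-h'}_{\text{var}},\qquad \lambda = \tanh(\omega/4) < 1,
\end{equation*}
where $\omega \eqdef \max C - \min C$ is finite under \autoref{ass:lip}. This is a consequence of the Birkhoff--Hopf theorem applied to the positive integral operator $h\mapsto\int e^{h-C(\cdot,y)}\d\mu(y)$, along the lines of \citet{lemmens_nonlinear_2012}; crucially, the rate $\lambda$ does not depend on $\mu$. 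Moreover $\Ctrans{h}{\mu}$ inherits the $L$-Lipschitz regularity of $C$ and satisfies $\norm{\Ctrans{h}{\mu}}_{\text{var}}\leq\omega$ uniformly in $h$, so from $t\geq 1$ the iterates lie, modulo additive constants, in the compact subset of $\Cc(\Xx)$ cut out by Arzelà--Ascoli.

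Combining contractivity with uniform rate $\lambda$ and compactness, the standard theory of iterated random Lipschitz maps produces a unique invariant probability measure on the quotient space $(\Cc(\Xx)/\RR)^2$, together with geometric convergence in the induced Wasserstein distance starting from any initial $(f_0,g_0)$. Concretely, I would make this rigorous by a synchronous coupling: running two chains from different initial conditions but driven by the same stream $(\hat\alpha_t,\hat\beta_t)_t$, the contraction estimate implies that the expected variation-seminorm distance between the two trajectories decays like $\lambda^t$, which yields both existence of the stationary law and its independence from initialization. The principal obstacle, and the reason the statement has to be read on a quotient, is the gauge invariance $(f,g)\mapsto(f+c,g-c)$ of the Sinkhorn fixed-point system: this symmetry is preserved exactly by the randomized dynamics, so the additive constant of the initialization does not decay and convergence in law genuinely takes place on $(\Cc(\Xx)/\RR)^2$, which is the natural state space for the dual problem \eqref{eq:dual}.
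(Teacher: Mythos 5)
Your proposal is correct and follows essentially the same route as the paper's proof: both reduce the claim to the theory of iterated random functions (Diaconis--Freedman), driven by the uniform contractivity of the soft $C$-transform in the variation seminorm, with the rate independent of the sampled measures $\hat\alpha,\hat\beta$. Your explicit synchronous-coupling sketch and your remark that the gauge invariance $(f,g)\mapsto(f+c,g-c)$ forces the convergence to be read on the quotient $(\Cc(\Xx)/\RR)^2$ are welcome clarifications of points the paper leaves implicit (it works with $\|\cdot\|_{\text{var}}$ and identifies functions up to additive constants), but they do not change the substance of the argument.
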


This result follows from \citet{diaconis_iterated} convergence theorem on
iterated random functions which are contracting on average. We use the
fact that $\Ctrans{\cdot}{\hat \beta}$ and $\Ctrans{\cdot}{\hat \alpha}$ are
\textit{uniformly} contracting, independently of the distributions $\hat \alpha$ and
$\hat \beta$, for the variational norm $\Vert \cdot \Vert_{\text{var}}$.
Using the law of large number for Markov chains
\citep{breiman_strong_1960}, the (tractable) average $\frac{1}{t} \sum_{s=1}^t \exp(-\bar f_s)$
converges almost surely to $\EE[e^{-f_\infty}]
 \in \Cc(\Xx)$. This expectation verifies the functional equations
\begin{equation}
    \EE[e^{-f_\infty}] =
     \int_{y} \EE[e^{g_\infty(y) -C(\cdot, y)}]\d \beta(y) 
     \quad
    \EE[e^{-g_\infty}] =
    \int_{x} \EE[e^{f_\infty(x) -C(x, \cdot)}]\d \alpha(x) 
\end{equation}
These equations are close to the Sinkhorn fixed point equations, and
get closer as $\varepsilon$ increases, since $\varepsilon \EE[\exp(\pm f_\infty /
\varepsilon)] \to \EE[\pm f_\infty]$ as $\varepsilon \to \infty$. Running the random
Sinkhorn algorithm with averaging fails to provide exactly the dual solution, but solves an approximate problem.
%
% We leave the quantification of this approximation for future work.

%%%
\subsection{Online Sinkhorn}

We make the following \citet{robbins1951stochastic} assumption on the weight sequence. We then state an approximate convergence result for the online Sinkhorn algorithm with fixed batch-size $n(t) = n$.

\begin{assumption}\label{ass:weights}
    ${(\eta_t)}_t$ is such that
    $\sum \eta_t = \infty$ and $\sum \eta_t^2 < \infty$, $0 \leq \eta_t \leq 1$ for all $t > 0$.
\end{assumption}

\begin{proposition}\label{prop:convergence_approx}
    Under \autoref{ass:lip} and \ref{ass:weights}, the online Sinkhorn algorithm (\autoref{alg:online_sinkhorn}) yields a sequence $(f_t, g_t)$ that reaches a
    ball centered around $f^\star, g^\star$ for the variational norm $\Vert
    \cdot \Vert_{\var}$.
     Namely, there exists $T > 0$, $A > 0$ such that for all $t > T$, almost surely
    \begin{equation}
        \Vert f_t - f^\star \Vert_{\var}
        + \Vert g_t - g^\star \Vert_{\var} 
        \leq \frac{A}{\sqrt{n}} .
    \end{equation}
\end{proposition}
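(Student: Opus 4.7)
The plan is to read \eqref{eq:updates} as a stochastic approximation recursion whose deterministic part is a contraction in variation norm, and to chase the resulting recursion to show that the iterates enter a ball of radius proportional to the Monte Carlo noise, which is $O(1/\sqrt{n})$ because each batch has fixed size $n$.

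First I would work in log-space, where $\Vert\hat f_t - f^\star\Vert_{\var}$ coincides with the Hilbert projective distance between $\hat u_t$ and $u^\star$. The update $\hat u_{t+1} = (1-\eta_t)\hat u_t + \eta_t\hat T_{\hat\beta_t}(\hat v_t)$, where $\hat T_\mu(v)(x) \triangleq \int \kappa_y(x)/v(y)\,\d\mu(y)$ and $\hat T_\beta(v^\star) = u^\star$, is a convex combination of two positive functions. Using the standard sub-invariance of the Hilbert metric under convex combinations (together with the fact that the variation seminorm vanishes on constants) I would establish
\[
\Vert\hat f_{t+1} - f^\star\Vert_{\var} \le (1-\eta_t)\Vert\hat f_t - f^\star\Vert_{\var} + \eta_t\,\Vert\Ctrans{\hat g_t}{\hat\beta_t} - \Ctrans{g^\star}{\beta}\Vert_{\var},
\]
and a symmetric bound on the $g$-coordinate.

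Next I would split the residual $\Ctrans{\hat g_t}{\hat\beta_t} - \Ctrans{g^\star}{\beta}$ as a deterministic term $\Ctrans{\hat g_t}{\beta} - \Ctrans{g^\star}{\beta}$ plus a Monte Carlo fluctuation $\Ctrans{\hat g_t}{\hat\beta_t} - \Ctrans{\hat g_t}{\beta}$. The deterministic term is handled by the uniform Birkhoff--Hopf contraction property of $\Ctrans{\cdot}{\beta}$ already invoked in the background (cf.\ \cite{lemmens_nonlinear_2012}): compactness of $\Xx$ and \autoref{ass:lip} imply that $C$ is bounded, hence $\Ctrans{\cdot}{\beta}$ contracts the variation norm by a factor $\kappa<1$. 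For the noise term, writing $\Ctrans{\hat g_t}{\hat\beta_t}(x) - \Ctrans{\hat g_t}{\beta}(x) = -\log(1 + \varepsilon_n(x,t))$ where $\varepsilon_n(x,t)$ is a centred empirical average of the uniformly bounded functions $y \mapsto e^{\hat g_t(y) - C(x,y)}$, a Hoeffding bound combined with a chaining/covering argument over the compact set $\Xx$ (possible because these functions are uniformly Lipschitz in $x$ by \autoref{ass:lip}) yields $\sup_x |\varepsilon_n(x,t)| \le B/\sqrt n$ almost surely for some deterministic $B$, hence $\Vert\Ctrans{\hat g_t}{\hat\beta_t} - \Ctrans{\hat g_t}{\beta}\Vert_{\var} \le B/\sqrt n$.

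Summing the two coordinates, with $a_t \triangleq \Vert\hat f_t - f^\star\Vert_{\var} + \Vert\hat g_t - g^\star\Vert_{\var}$, I would obtain the scalar recursion
\[
a_{t+1} \le \bigl(1 - (1-\kappa)\eta_t\bigr) a_t + \eta_t\,\frac{B}{\sqrt n}.
\]
Under \autoref{ass:weights} the products $\prod_{s\le t}(1 - (1-\kappa)\eta_s)$ tend to zero, so a standard Robbins--Monro-style telescoping gives $\limsup_t a_t \le A/\sqrt n$ with $A = B/(1-\kappa)$, which is the stated bound from some $T$ onwards.

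The main obstacle I anticipate is the very first step: turning the linear convex combination defining $\hat u_{t+1}$ into a clean convex-combination inequality in variation norm. Because log is non-linear, this requires carefully invoking the Hilbert-metric sub-invariance for positive functions and working modulo additive constants throughout. A secondary technical point is to certify uniform a priori bounds on $\Vert\hat f_t\Vert_\infty, \Vert\hat g_t\Vert_\infty$ along the trajectory (so that both $\kappa$ and the concentration constant $B$ can be taken time-independent); this should follow by induction from the representation \eqref{eq:param} and the boundedness of $C$ on $\Xx\times\Xx$.
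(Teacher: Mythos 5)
Your skeleton (convex-combination inequality in variation norm, contraction of the soft $C$-transform, scalar recursion in $a_t$) matches the paper's, but there is a genuine gap in how you control the noise. You claim that a Hoeffding-plus-covering argument yields $\sup_{x}|\varepsilon_n(x,t)|\leq B/\sqrt{n}$ \emph{almost surely} for a deterministic $B$. This is false: for a fixed batch size $n$, the supremum of a centred empirical process over $\Xx$ exceeds any fixed multiple of $1/\sqrt{n}$ with positive probability, and since the batches $(\hat\alpha_t,\hat\beta_t)_t$ are independent, Borel--Cantelli forces this to happen for infinitely many $t$ almost surely. What is true is an $L^1$ bound, $\EE\sup_x|\varepsilon_n(x,t)|\leq B/\sqrt{n}$ (a uniform law of large numbers for a Donsker class), or a high-probability bound with $B$ depending on the confidence level. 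Your telescoping of the recursion $a_{t+1}\leq(1-(1-\kappa)\eta_t)a_t+\eta_t B/\sqrt{n}$ therefore rests on a per-iteration bound that does not hold, and the almost-sure conclusion does not follow from it. The paper closes exactly this gap by a different mechanism: it dominates $e_t$ by a weighted running average $r_{t+1}=(1-\tilde\eta_t)r_t+\frac{\tilde\eta_t}{1-\kappa}(\Vert\epsilon_{\hat\beta_t}\Vert_{\var}+\Vert\iota_{\hat\alpha_t}\Vert_{\var})$ of the (random, i.i.d.) noise norms and invokes a law of large numbers for such weighted averages (this is where \autoref{ass:weights}, in particular $\sum\eta_t^2<\infty$, is actually used), so that $r_t$ converges almost surely to its expectation $r_\infty\leq A/\sqrt{n}$. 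You need this averaging step, or an equivalent martingale argument; the recursion alone cannot deliver an almost-sure bound from an in-expectation noise level.

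Two secondary remarks. First, you centre the fluctuation at the current iterate, writing the noise as $\Ctrans{\hat g_t}{\hat\beta_t}-\Ctrans{\hat g_t}{\beta}$, whereas the paper splits off the contraction with respect to the \emph{empirical} measure, $\Ctrans{\hat g_t}{\hat\beta_t}-\Ctrans{g^\star}{\hat\beta_t}$, leaving a noise term $\epsilon_{\hat\beta_t}=f^\star-\Ctrans{g^\star}{\hat\beta_t}$ evaluated at the \emph{fixed} optimal potential. The paper's choice is what makes the noise norms an i.i.d. sequence (a fixed function class, so \autoref{lemma:lln} applies directly); your choice makes the function class depend on the trajectory, forcing you to establish the uniform a priori bounds on $\Vert\hat g_t\Vert_\infty$ that you flag, and breaking the i.i.d. structure needed for the averaging argument above. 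Since \autoref{lemma:contractivity} is uniform over empirical measures, the paper's split costs nothing and is strictly easier. Second, the passage from a bound on the empirical average inside the exponential to a bound on $\Vert\log(1+\varepsilon_n)\Vert_{\var}$ requires handling the event where the denominator is small; the paper does this by conditioning on $\{\tilde E_n\leq\frac12\}$ and using Markov's inequality, a step your sketch omits.
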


The proof is reported in \autoref{sec:proof_convergence_approx}. It is not possible
to ensure the convergence of online Sinkhorn with constant batch-size. This is a
fundamental difference with other SA algorithms, e.g. SGD on strongly convex
objectives (see \cite{moulines_non-asymptotic_2011}). This stems from the fact
that the metric for which $\text{Id} - \Ff$ is contracting is
not a Hilbert norm. The constant $A$ depends on $L$, the diameter of $\Xx$ and the regularity of potentials $f^\star$ and $g^\star$, but not on the dimension. It behaves like $\exp(\frac{1}{\varepsilon})$ when
$\varepsilon \to 0$.
Fortunately, we can show the almost sure convergence of the online Sinkhorn algorithm
with slightly increasing batch-size $n(t)$ (that may grow arbitrarily slowly for $\eta_t = \frac{1}{t}$), as specified in the following
assumption.

\begin{assumption}\label{ass:double_weights}
    For all $t > 0$, $n(t) = \frac{B}{w_t^2} \in \NN$ and $0 \leq \eta_t \leq
    1$. $\sum w_t \eta_t < \infty$ and $\sum \eta_t = \infty$.
\end{assumption}

\begin{proposition}\label{prop:convergence_true}
    Under \autoref{ass:lip} and
    \ref{ass:double_weights}, the online Sinkhorn algorithm converges almost surely:
    \begin{equation}
        \Vert \hat f_t - f^\star \Vert_{\var} + \Vert \hat g_t - g^\star \Vert_{\var} \to 0.
    \end{equation}
\end{proposition}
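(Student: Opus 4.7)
The plan is to derive a stochastic recursion of the form $V_{t+1} \leq (1 - (1-\lambda) \eta_t) V_t + \eta_t E_t$ for the variation-seminorm error $V_t \eqdef \|\hat f_t - f^\star\|_{\var} + \|\hat g_t - g^\star\|_{\var}$, with $\lambda \in (0,1)$ and $E_t$ of order $w_t$, and then apply a Robbins--Siegmund almost-sure convergence lemma. The contraction factor $(1-\lambda)$ comes from the strict contractivity of the soft $C$-transform $\Ctrans{\cdot}{\mu}$ in the variation seminorm, uniformly in the reference measure $\mu \in \Pp(\Xx)$, as already exploited in the proof of \autoref{prop:convergence_approx} and in \autoref{prop:markov}. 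The batch-size schedule $n(t) = B/w_t^2$ of \autoref{ass:double_weights} is tuned precisely so that $E_t = O(w_t)$ and the Robbins--Siegmund summability conditions are met.

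The first step is to decompose the exponential-space update as a noiseless SA step plus a centered perturbation,
\begin{equation}
\hat u_{t+1} = (1-\eta_t) \hat u_t + \eta_t S_\beta(\hat v_t) + \eta_t \xi_t^{(u)}, \qquad \xi_t^{(u)} \eqdef \hat S_{\hat \beta_t}(\hat v_t) - S_\beta(\hat v_t),
\end{equation}
where $S_\beta(v)(\cdot) = \int \kappa_y(\cdot)/v(y)\,\d\beta(y)$, with the symmetric identity for $\hat v_{t+1}$. Under \autoref{ass:lip} and compactness of $\Xx$, both $\kappa_y$ and the iterates $(\hat u_t, \hat v_t)$ live in a fixed positive range, so Hoeffding/Bernstein concentration applied to the $n(t)$ i.i.d.\ samples composing $\hat S_{\hat \beta_t}$ yields both $\EE[\|\xi_t^{(u)}\|_\infty \mid \mathcal{F}_t] = O(w_t)$ and $\|\xi_t^{(u)}\|_\infty \leq C w_t$ almost surely up to a summable-probability failure event. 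I would then pass to the log-potentials: using the joint shift-invariance and sup-norm nonexpansiveness of $m_\eta(f_1, f_2) \eqdef -\log[(1-\eta) e^{-f_1} + \eta e^{-f_2}]$, together with the uniform two-sided bounds on $\hat u_t$ to absorb $\eta_t \xi_t^{(u)}$ as an $O(\eta_t w_t)$ additive perturbation of $\hat f_{t+1}$, and invoking the uniform strict contractivity $\|\Ctrans{g}{\mu} - \Ctrans{g'}{\mu}\|_{\var} \leq \lambda \|g - g'\|_{\var}$ (cf.\ \cite{lemmens_nonlinear_2012}), I would symmetrize over $(\hat f, \hat g)$ and sum to obtain the target recursion
\begin{equation}
V_{t+1} \leq (1 - (1-\lambda) \eta_t) V_t + \eta_t E_t,
\end{equation}
with $E_t = O(w_t)$ in $L^1$ and with a summable-variance martingale component.

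Finally, I would conclude via Robbins--Siegmund. Splitting $E_t = \EE[E_t \mid \mathcal{F}_t] + (E_t - \EE[E_t \mid \mathcal{F}_t])$, the conditional-bias part has weighted sum $\sum \eta_t \EE[E_t \mid \mathcal{F}_t]$ absolutely summable by $\sum \eta_t w_t < \infty$, while the martingale-difference residual has quadratic variation bounded by $\sum \eta_s^2 w_s^2 \leq (\sup_s \eta_s w_s) \sum \eta_s w_s < \infty$, so it converges almost surely. Robbins--Siegmund then gives simultaneously the a.s.\ convergence of $V_t$ to some $V_\infty \geq 0$ and $\sum \eta_t V_t < \infty$; combined with $\sum \eta_t = \infty$, this forces $V_\infty = 0$, i.e.\ $V_t \to 0$ a.s. The main obstacle will be the second step: the nonlinearity of $-\log$ blocks a direct variation-seminorm recursion, so one needs uniform two-sided bounds on $\hat u_t, \hat v_t$ along the whole trajectory, plus careful bookkeeping that the local "mixing weights" inside $m_{\eta_t}$ stay close to $(1-\eta_t, \eta_t)$, so that the contraction factor $1 - (1-\lambda)\eta_t$ genuinely materializes rather than a weaker rate that would break summability against $\sum \eta_t = \infty$.
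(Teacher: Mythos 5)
Your proof is correct in outline and shares the paper's skeleton---a one-step recursion $V_{t+1}\leq (1-(1-\kappa)\eta_t)V_t + \eta_t E_t$ obtained from the uniform contractivity of the soft $C$-transform in $\Vert\cdot\Vert_{\var}$, with $E_t = O(w_t)$ guaranteed by the batch schedule $n(t)=B/w_t^2$---but it diverges from the paper at two points worth comparing. First, the decomposition: you center the noise at the \emph{current} iterate, $\xi_t = \hat S_{\hat\beta_t}(\hat v_t) - S_\beta(\hat v_t)$, which is a genuine martingale difference but forces you to convert an additive exp-space perturbation into an additive log-space one, hence the uniform two-sided bounds on $\hat u_t,\hat v_t$ you correctly flag as the main obstacle (and note the constant in your $O(w_t)$ bound then depends on $\Vert 1/\hat v_t\Vert_\infty$, which must itself be controlled uniformly in $t$). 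The paper instead defines the perturbation at the \emph{optimum}, $\epsilon_{\hat\beta_t} \eqdef f^\star - \Ctrans{g^\star}{\hat\beta_t}$, so that the error sits inside the exponent from the start and the variation-norm recursion follows from elementary $\max/\min$ manipulations plus Jensen; the price is that $\epsilon_{\hat\beta_t}$ is not centered, and its expected variation norm must be bounded by $A/\sqrt{n}$ via the uniform law of large numbers together with a Markov-inequality event split (the $\delta = \exp(-\Vert g^\star\Vert_{\var}-C_{\max})$ bookkeeping). Second, the conclusion: the paper takes expectations, shows $\EE e_t \to \ell$ and derives a contradiction from $\ell>0$ using $\sum\eta_t=\infty$, then asserts almost-sure convergence from $\EE e_t \to 0$ and $e_t\geq 0$---a step that as written only yields convergence in probability (and a.s.\ along a subsequence). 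Your Robbins--Siegmund route, which exploits the centering of $\xi_t$ to get $\sum\eta_t\EE[E_t\mid\Ff_t]<\infty$ a.s.\ and then $V_t\to V_\infty$ with $\sum\eta_t V_t<\infty$ forcing $V_\infty=0$, actually delivers the stated almost-sure conclusion more rigorously; in exchange you take on the heavier burden of uniform trajectory bounds that the paper's optimum-centered decomposition sidesteps.
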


The proof is reported in \autoref{sec:proof_prop_convergence}. It relies on a uniform
law of large number for functions \citep[][chapter
19]{van_der_vaart_asymptotic_2000} and on the uniform contractivity of soft
$C$-transform operator \citep[e.g.][Proposition 19]{vialard2019elementary}. Consistency of the iterates is an original property---\cite{2016-genevay-nips} only show convergence of the OT value. Finally, using bounds from \cite{moulines_non-asymptotic_2011}, we derive  asymptotic rates of convergence for online Sinkhorn (see \autoref{app:proof_rate}), with
respect to the number of observed samples $N$. We write $\delta_N = \Vert \hat
f_{t(N)} - f^\star \Vert_{\var} + \Vert \hat g_{t(N)} - g^\star \Vert_{\var}$,
where $t(N)$ is the iteration number for which $n_t > N$ samples have been observed.

\begin{proposition}\label{prop:rate}
    For all $\iota \in (0, 1)$, $S > 0$ and $B \in \NN^\star$, setting $\eta_t =
    \frac{S}{t^{1 - \iota}}$, $n(t) = \lceil B t^{4\iota} \rceil$, there
    exists~$D > 0$ independant of $N$ and $N_0 > 0$ such that, for all $N >
    N_0$, $\delta_N \leq \frac{D}{N^{\frac{1 - \iota}{1 + 4 \iota}}}$.
\end{proposition}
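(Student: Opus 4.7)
The plan is to sharpen the qualitative almost-sure convergence of \autoref{prop:convergence_true} into a quantitative iteration-level bound $\Delta_t \eqdef \|\hat f_t - f^\star\|_{\var} + \|\hat g_t - g^\star\|_{\var} = O(\eta_t)$, and then invert the sample-counting relation $N = n_t$. The conversion step is arithmetic: with $n(s) = \lceil B s^{4\iota} \rceil$,
\begin{equation}
n_t = \sum_{s=1}^t n(s) \sim \frac{B}{1+4\iota}\, t^{1+4\iota},
\end{equation}
so $t(N) = \Theta(N^{1/(1+4\iota)})$ and $\eta_{t(N)} = \Theta(N^{-(1-\iota)/(1+4\iota)})$, which is the announced rate. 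Thus the whole work reduces to establishing $\Delta_t \lesssim \eta_t$ for $t$ large enough.

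\textbf{Perturbed contraction.} Following the proof of \autoref{prop:convergence_true}, I would rewrite the online Sinkhorn update in operator form,
\begin{equation}
\hat u_{t+1} - u^\star = (1-\eta_t)(\hat u_t - u^\star) + \eta_t\bigl(T(\hat u_t, \hat v_t) - u^\star\bigr) + \eta_t\, \xi_t,
\end{equation}
where $T$ is the noiseless soft $C$-transform map (for which $(u^\star, v^\star)$ is a fixed point) and $\xi_t \eqdef \hat \Ff_{\hat \alpha_t, \hat \beta_t}(\hat u_t, \hat v_t) - \Ff(\hat u_t, \hat v_t)$ is the mean-zero minibatch noise. Using the uniform contractivity of $T$ in variational norm with modulus $\rho < 1$ depending on $L$, $\text{diam}(\Xx)$ and $\varepsilon$ (\cite[Proposition~19]{vialard2019elementary}), together with convexity of $\|\cdot\|_{\var}$, yields the scalar stochastic recursion
\begin{equation}
\Delta_{t+1} \leq \bigl(1-(1-\rho)\eta_t\bigr) \Delta_t + \eta_t\, \|\xi_t\|_{\var}.
\end{equation}

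\textbf{Noise control and rate extraction.} Under \autoref{ass:lip}, the kernel class $\{e^{-C(\cdot,y)} : y \in \Xx\}$ is Lipschitz on a compact, hence totally bounded with controlled bracketing integral; Dudley-type chaining sharpens the uniform law of large numbers used in the proof of \autoref{prop:convergence_true} to its parametric rate $\EE\,\|\xi_t\|_{\var} \lesssim n(t)^{-1/2} = t^{-2\iota}/\sqrt{B}$. The exponent $4\iota$ on the batch size is calibrated exactly so that $\sum_t \eta_t\, n(t)^{-1/2} < \infty$, which is also what makes \autoref{ass:double_weights} hold. Plugging this noise bound into the recursion and unrolling via the recursion-analysis lemmas of \cite{moulines_non-asymptotic_2011} gives a bias term of order $\exp(-(1-\rho) S\, t^\iota/\iota)$ (super-polynomially small) and a fluctuation term of order $\eta_t/(1-\rho)$, so $\EE[\Delta_t] \leq D_1\, \eta_t$. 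A Doob/Borel-Cantelli step on the martingale part of $\xi_t$, combined with the a.s. convergence from \autoref{prop:convergence_true}, upgrades this to $\Delta_t \leq D\, \eta_t$ almost surely for $t$ large, and the sample conversion of the first paragraph closes the argument. The main obstacle is this sharpened noise bound: \autoref{prop:convergence_true} only needs $o(1)$ decay, whereas here the parametric rate must be obtained uniformly in $\Xx$ via empirical-process chaining, and the resulting constant inherits a multiplicative $1/(1-\rho) \sim \exp(1/\varepsilon)$ in the low-regularization regime, matching the remark after \autoref{prop:convergence_approx}.
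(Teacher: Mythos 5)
Your route is the paper's route almost line for line: the same perturbed-contraction recursion in variational norm (the paper's \eqref{eq:recursion}), the same $\Oo(n(t)^{-1/2})$ uniform law of large numbers bound on the minibatch noise (\autoref{lemma:lln}, Eq.~\eqref{eq:vdv}), the same Bach--Moulines-style unrolling into a super-polynomially small bias term plus a polynomial fluctuation term, and the same inversion $n_t \asymp t^{1+4\iota}$ to convert iterations into samples. The only genuine additions are the chaining remark (already subsumed by \autoref{lemma:lln}) and the Doob/Borel--Cantelli step upgrading the expectation bound to an almost-sure one, which the paper leaves implicit.

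There is, however, one step you assert that the recursion you wrote down does not deliver, and it is the load-bearing one. The local equilibrium of $\Delta_{t+1}\le(1-(1-\rho)\eta_t)\Delta_t+\eta_t\,\EE\|\xi_t\|_{\var}$ is $\EE\|\xi_t\|_{\var}/(1-\rho)\asymp n(t)^{-1/2}\asymp t^{-2\iota}$, not $\eta_t/(1-\rho)\asymp t^{-(1-\iota)}$: the noise injected per step is $\eta_t\epsilon_t$ while the contraction per step is only $(1-\rho)\eta_t$, so upon unrolling the $\eta_t$'s cancel and $\Delta_t\lesssim\exp(-\Omega(t^{\iota}))+\sup_{s\ge t/2}\EE\|\xi_s\|_{\var}/(1-\rho)$. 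The two exponents agree only at $\iota=1/3$; for $\iota<1/3$ --- exactly the regime that produces the advertised near-$\Oo(1/N)$ rate --- the claim $\EE[\Delta_t]\le D_1\eta_t$ does not follow, and what this recursion actually yields is $\delta_N\lesssim N^{-2\iota/(1+4\iota)}$. Making the fluctuation term $\Oo(\eta_t)$ would require $n(t)\gtrsim\eta_t^{-2}\asymp t^{2(1-\iota)}$, i.e.\ a batch-size exponent $2(1-\iota)$ rather than $4\iota$. To be fair, the paper's own Eq.~\eqref{eq:rates} performs the identical substitution (writing $t^{-a}$ where the recursion supports only $t^{-b}$), so you have faithfully reconstructed the intended argument; but as a standalone proof the step fails, and closing it needs either larger batches or an argument exploiting cancellation in the mean-zero noise before taking variational norms --- which the non-Hilbertian metric makes delicate, as the paper itself notes after \autoref{prop:convergence_approx}.
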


Online Sinkhorn thus provides estimators of potentials whose asymptotic sample
complexity in variational norm is arbitrarily close to $\Oo(\frac{1}{N})$. To
the best of our knowledge, this is an original property. It also results in a
distance estimator $\hat \Ww_N$ whose complexity is arbitrarily close to
$\Oo(\frac{1}{\sqrt{N}})$, recovering existing asymptotic rates from
\cite{2019-Genevay-aistats}, for any Lipschitz cost. We derive non-asymptotic rates in \autoref{app:proof_rate}
(see \eqref{eq:non-asymptotic}), which make explicit the bias-variance trade-off
when choosing the step-sizes and batch-sizes. We also give the explicit form
of $D$; it does not depend on the dimension. For low $\epsilon$,
$D$ is proportional to $\exp(\frac{2}{\epsilon})$; the bound is therefore vacuous for
$\varepsilon \to 0$. Note that using growing
batch-sizes amounts to increase the budget of a single iteration over time: the
overall computational complexity after seeing $N$ samples is always $\Oo(N^2)$. 

\if\icml0
\paragraph{Batch-sizes and step-sizes.}

To provide practical guidance on choosing rates in batch-sizes $n(t)$ and
step-sizes $\eta_t$, we can parametrize $\eta_t = \frac{1}{t^a}$ and $n(t) = B
t^b$ and study what is implied by \autoref{ass:double_weights} and
\autoref{ass:total_growing}. We summarize the schedules for which convergence is
guarantees in \autoref{table:growing}. Note that in practice, it is useful to
replace $t$ by $(1 + r\, t)$ in these schedules. We set $r=0.1$ in all
experiments.
\begin{table}[t]
    \centering
    \caption{Schedules of batch-sizes and learning rates that ensures online Sinkhorn convergence.}

    \begin{tabular}{ccc}
        \toprule
        Param. schedule &Online Sinkhorn & Fully-corrective online Sinkhorn \\
        \midrule
        Batch size $\displaystyle n(t) = B t^b$ & $0 < b $ & $0 \leq b$ \\
        Step size $\displaystyle \eta_t = \frac{1}{t^a}$ &$\displaystyle a \geq 1 - \frac{b}{2}$ & 
        \parbox{5cm}{
        \begin{equation}
            \left\{
                \begin{aligned}
            a &> \frac{1}{2} - \frac{b}{2} \qandq b <1 \\
            a &\geq 0 \qandq b \geq 1
                \end{aligned}
                \right.
        \end{equation}}\\
        \bottomrule
    \end{tabular}
    \label{table:growing}
\end{table}

\paragraph{Mirror descent interpretation.} Online Sinkhorn can be interpreted as a non-convex stochastic mirror-descent, as detailed in \autoref{sec-mirror}. It provides an original interpretation of the Sinkhorn algorithm, different from recent work \citep{leger2019sinkhorn,mishchenko2019sinkhorn}.
\fi

% Online Sinkhorn thus works for $\eta_t = \frac{1}{t^a}$ with $a \in [0, 1]$,
% provided that we use batch-sizes of size $n(t) = n\, t^{2(1-a)} \log^c t$, with $c >
% 1$. Slowing down Sinkhorn iterations thus permits to work with batches whose
% size increases more slowly. 

%!TEX root = article.tex
\begin{figure}[t]
    \centering
    \begin{widepage}
    \includegraphics[width=\linewidth]{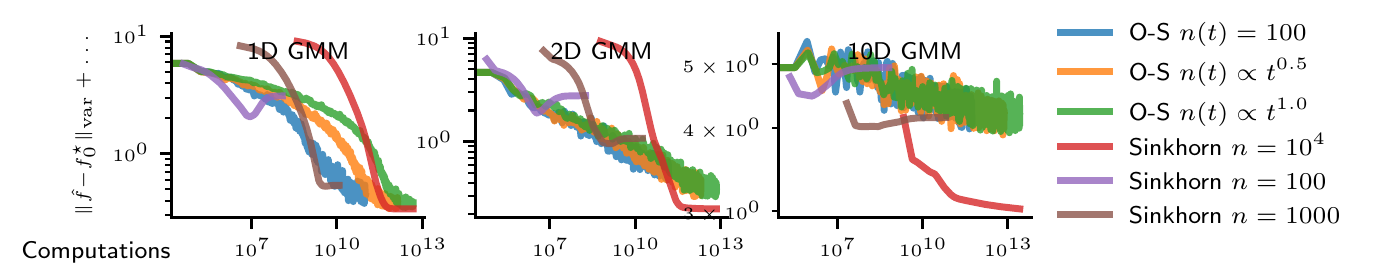}
    \end{widepage}
    \caption{Online Sinkhorn consistently estimate the true regularized OT potentials. Convergence here is measured in term of distance with potentials evaluated on a "test" grid of size $n=10^4$. Online-Sinkhorn can estimate potentials faster than sampling then scaling the cost matrix.}
    \label{fig:convergence}
\end{figure}

\begin{figure}[t]
    \centering
    \begin{widepage}
    \includegraphics[width=\linewidth]{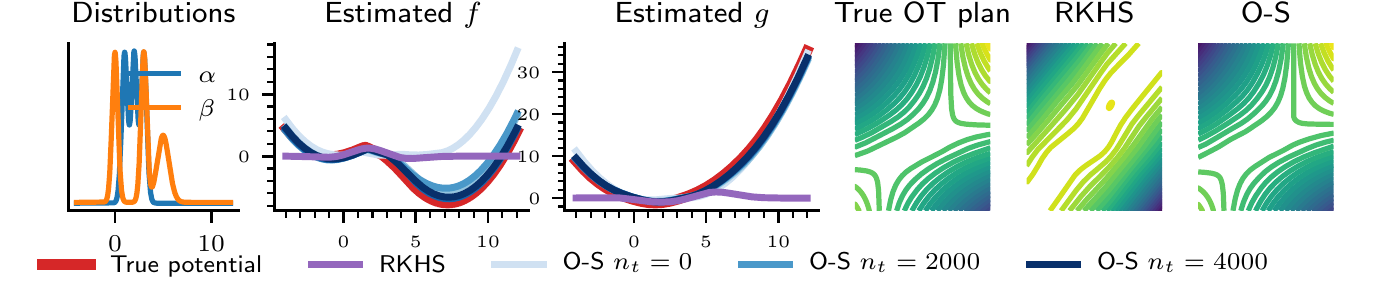}
    \end{widepage}
    \caption{Online Sinhkorn finds the correct potentials over all space, unlike SGD over a RKHS parametrization of the potentials. The plan is therefore correctly estimated everywhere.}
    \label{fig:potentials}
    \vspace{-1em}
\end{figure}

\section{Numerical experiments}\label{sec:exps}

% We have introduced and stated convergence results on the online Sinkhorn
% algorithm. These convergence results are non-quantitative and therefore
% require an experimental validation. Our experiments are three-fold: first, we
% show that online Sinkhorn correctly estimates the solutions of \eqref{eq:wass}
% and the Sinkhorn distance, overcoming the bias due to the fixed a priori
% sampling of the regular Sinkhorn algorithm. Then, we show how online Sinkhorn
% accelerates the Sinkhorn algorithm, by progressively estimating sketches of
% the dual potentials, in parallel to the computation of the distance matrix.
% Finally, we show how online Sinkhorn allows one to estimate accurately the
% geometry of the dual, significantly improving the result using SGD with RKHS
% expansions~\citep{2016-genevay-nips}.

The major purpose of online Sinkhorn (OS) is to handle OT between continuous
distributions.  We first show that it is a valid alternative to applying Sinkhorn
on a single realization of continuous distributions, using examples of Gaussian mixtures of varying dimensions.
We then illustrate that OS is able to estimate precisely
Kantorovich dual potentials, significantly improving the result obtained using SGD with RKHS
expansions~\citep{2016-genevay-nips}.
Finally, we show that OS is an efficient warmup strategy to accelerate Sinkhorn for discrete problems on several real and synthetic datasets.

% We first consider a discrete distribution $(\alpha, \beta)$, to be able to
% compute the reference distance $\Ww = \Ww(\alpha, \beta)$ and the optimal
% potentials $f^\star$, $g^\star$, using Sinkhorn algorithm. The goal here is not
% to perform better than the Sinkhorn algorithm in the long run. Indeed, the
% constraints of online Sinkhorn impose unnecessary slow-downs when dealing with
%  discrete distributions with small supports. Rather, our purpose is to illustrate the improved
% precision of online Sinkhorn for estimating true OT distances. We choose $\alpha$ and $\beta$ to be two
% discrete 1-D distributions, $\Xx=\RR$, sampled from the continuous densities
% displayed in
% \autoref{fig:potentials}. We set $\varepsilon = 10^{-2} \max_{x,y}
% C(x,y)$, where we use the squared Euclidean loss (regularized $\Ww_2$
% setting)---the distributions $\alpha$ and $\beta$ have bounded support. We use
% $\eta_t = \frac{1}{\sqrt{t}}$ for online Sinkhorn and a fixed batch-size $n$, in
% all experiments. We compare the performance of Sinkhorn, online Sinkhorn and
% random Sinkhorn, measuring $\Vert f - f^\star \Vert_{\text{var}} + \Vert g -
% g^\star \Vert_{\text{var}}$ and the absolute error $| \Ww_t - \Ww |$ versus the
% number of computations performed---the evaluation of $C(x_i, y_i)$ and the
% computation of each addition in the $C$-transform being considered as elementary
% computation units. We further report the performance of using out-of-loop
% averaging with $\gamma_t = \frac{1}{\sqrt{t}}$.

\subsection{Continuous potential estimation with online Sinkhorn}\label{sec:continuous}

\paragraph{Data and quantitative evaluation.}\label{sec:online_exp}

We measure the performance of our algorithm in a
 continuous setting, where $\alpha$ and $\beta$ are  parametric
distributions (Gaussian mixtures in 1D, 2D and 10D, with 3, 3 and 5 modes, so
that $C_{\max} \sim 1$), from which we draw samples. In the absence of reference
potentials $(f^\star, g^\star)$ (which cannot be computed in closed form),
we compute ``test'' potentials $(f^\star_0, g^\star_0)$ on realizations $\hat
\alpha_0$ and $\hat \beta_0$ of size $10000$, using Sinkhorn. We then compare
OS to Sinkhorn runs of various size , trained on realizations $N=(100,1000,
10000)$ independent of the reference grid (to avoid reducing the problem to a
discrete problem between $\hat \alpha_0$ and $\hat \beta_0$). To measure
convergence, we compute $\delta_t = \Vert \hat f_t - f^\star_0 \Vert_{\var} +           
\Vert \hat g_t -  g_0^\star \Vert_{\var}$, evaluated on the grid defined by
$\hat \alpha_0$ and $\hat \beta_0$, which constitutes a Monte-Carlo approximation of the error.
We evaluate OS with and without full-correction, with
different batch-size schedules (see \autoref{app:online_exp}), as well as the randomized Sinkhorn algorithm. Quantitative results are average over 5 runs. We report quantitative results for $\epsilon = 10^{-2}$ and non fully-corrective online Sinkhorn in the main text, and all other curves in Supp.~\autoref{fig:convergence_all}. In
Supp.~\autoref{fig:gaussian}, we also report results for OT between Gaussians, which is a simpler and less realistic setup, but for which closed-form expressions of the potentials are known \cite{janati_entropic_2020}.

\paragraph{Comparison to SGD.}\label{sec:compare}
For qualitative illustration, on the 1D and 2D problem, we consider the main existing competing
approach \citep{2016-genevay-nips}, in which $f_t(\cdot)$ is parametrized as
$\sum_{i=1}^{n_t} \alpha_t \kappa(\cdot, x_i)$ (and similarly for $g_t$), where
$\kappa$ is a reproducing kernel (typically a Gaussian). This differs
significantly from online Sinkhorn, where we express $e^{-f_t}$ as a Gaussian
mixture. The dual problem \eqref{eq:sinkhorn} is solved using SGD, with convergence guarantees on the dual energy.  As advocated
by the authors, we run a grid search over the bandwidth parameter~$\sigma$ of
the Gaussian kernel to select the best performing runs.

\paragraph{Earlier potential convergence.}
We study convergence curves in \autoref{fig:convergence}, comparing algorithms
at equal number of multiplications. OS outperforms or matches Sinkhorn for $N=100$
and $N=1000$ on the three problems; it approximately matches the performance of
Sinkhorn on $N=10000$ new iterates on the 1D and 2D problems. On the two
low-dimensional problems, online Sinkhorn converges faster than Sinkhorn at the
beginning. Indeed, it initiates the computation of the potentials early, while the Sinkhorn
algorithm must wait for the cost matrix to be filled. This leads us
to study online Sinkhorn as a catalyser of Sinkhorn in the next paragraph. OS
convergence is slower (but is still noticeable) for the higher dimensional problem.
Fully-corrective OS performs better in this case (see Supp.~\autoref{fig:convergence_refit}). We also note that randomized Sinkhorn with batch-size $N$ performs on par with Sinkhorn of size $N$ (Supp.~\autoref{fig:convergence_randomized}).

\paragraph{Better-extrapolated potentials.} As illustrated in
\autoref{fig:potentials}, in 1D, online Sinkhorn refines the potentials $(\hat f_t, \hat g_t)_t$
until convergence toward $(f^\star, g^\star)$. Supp. \autoref{fig:potentials_2d} shows a visualisation for 2D GMM. As the parametrization~\eqref{eq:param} is
adapted to the dual problem, the algorithm quickly identifies the correct shape of
the optimal potentials---as predicted by \autoref{prop:convergence_true}. In
particular, OS estimates potentials with much less errors than SGD in a RKHS in
areas where the mass of $\alpha$ and $\beta$ is low. This allows to consistently estimate the transport plan, which cannot be achieved using SGD. SGD did not converge for $\epsilon < 10^{-1}$, while online Sinkhorn remains stable. OS does not require to set a bandwidth.

\setlength{\tabcolsep}{2pt}

\begin{figure}[t]
    \begin{widepage}
    \begin{minipage}{.7\linewidth}
    \includegraphics[width=\linewidth]{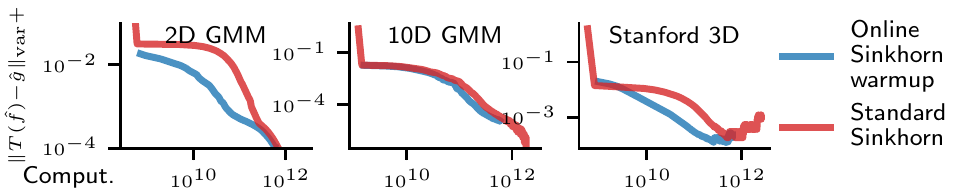}
    \end{minipage}%
    \hfill
    \begin{minipage}{.3\linewidth}
        \centering
        \small
        \begin{tabular}{lrrrr}
            \toprule
            $- \log \varepsilon$
            &  4 & 3 & 2 & 1 \\
            \midrule
            Stanford      &     5.3x &    4.4x &    2.3x &    1.2x \\
            10D GMM     &     1.4x &    1.5x &    1.3x &    1.2x \\
            2D GMM      &    17x &    3.7x &    1.3x &    2.0x \\
            \bottomrule
            \end{tabular}
    \end{minipage}
    \end{widepage}
    \stepcounter{table}
    \caption{Online Sinkhorn allows to warmup Sinkhorn during the evaluation of the cost matrix, and to speed discrete optimal transport. Table \thetable: Speed-ups provided by OS vs S to reach a $10^{-3}$ precision.}
    \label{fig:warmup}
    \vspace{-.8em}
\end{figure}

\subsection{Accelerating Sinkhorn with online Sinkhorn warmup}\label{sec:accelerating}

The discrete Sinkhorn algorithm requires to compute the full cost matrix $\Cz
\eqdef (C(x_i,y_i))_{i,j}$  of size $N \times N$, prior to estimating the
potentials $\f_1 \in \RR^N$ and $\g_1 \in \RR^N$ by a first $C$-transform. In
contrast, online Sinkhorn can progressively compute this matrix while computing
first sketches of the potentials. The extra cost of estimating the initial
potentials without full-correction is simply $2 N^2$, i.e. similar to filling-up
$\Cz$. We therefore assess the performance of \textit{online Sinkhorn as
Sinkhorn warmup} in a discrete setting. Online Sinkhorn is run with batch-size
$n$ during the first iterations, until observing each sample of $[1,N]$, i.e.
until the cost matrix $\Cz$ is completely evaluated. From then, the subsequent
potentials are obtained using full Sinkhorn updates. We consider the GMMs of
\autoref{sec:continuous}, as well as a 3D dragon from Stanford 3D scans
\cite{turk1994zippered} and a sphere of size $N=12000$. We measure convergence
using the error $\Vert \Ctrans{\hat f_t}{\alpha} - \hat g_t
\Vert_{\var} + \Vert \Ctrans{\hat g_t}{\beta} - \hat f_t \Vert_{\var}$,
evaluated on the support of $\alpha$ and $\beta$; this error goes to $0$. We use
$n(t) = \frac{N}{100} (1+0.1t)^{1/2}$---results vary little with the exponent.

\paragraph{Results.} We report convergence curves for $\varepsilon = 10^{-3}$ in
\autoref{fig:warmup}, and speed-ups due to OS in Table 1. Convergence curves for
different $\varepsilon$ are reported in Supp.~\autoref{fig:warmup_full}. The
proposed scheme provides an improvement upon the standard Sinkhorn algorithm.
After $N^2 d$ computations (the cost of estimating the full matrix $C$), both
the function value and distance to optimum are lower using OS: the full Sinkhorn
updates then relay the online updates, using an accurate initialization of the
potentials. The \textit{OS warmed-up} Sinkhorn algorithm then maintains its
advantage over the standard Sinkhorn algorithm during the remaining iterations.
The speed gain increases as $\varepsilon$ reduces and the OT problem becomes
more challenging. Sampling without replacement brings an additional speed-up.

%!TEX root = article.tex

\section{Conclusion}

We have extended the classical Sinkhorn algorithm to cope with streaming samples. The resulting online algorithm computes a non-parametric expansion of the inverse scaling variables using kernel functions. In contrast with previous attempts to compute OT between continuous densities, these kernel expansions fit perfectly the structure of the entropic regularization, which is key to the practical efficiently.
We have drawn links between regularized OT and stochastic approximation. This opens promising avenues to study convergence rates of continuous variants of Sinkhorn's iterations. Future work will refine the complexity constants and design adaptive non-parametric potential estimations.

\if\icml0
\section{Acknowledgements}
This work was supported by the European Research Council (ERC project NORIA). A.M thanks Anna Korba for helpful discussions on mirror descent algorithms, and Thibault Séjourné for proof-reading and relevant references.
\fi

\if\icml1
\section*{Broader impact}

This work is mostly a theoretical contribution on optimisation for comparing probability distributions. It has therefore no immediate societal impact to be expected.
\fi

\vfill

\printbibliography

\appendix

\onecolumn

%!TEX root = article.tex
\section{Proofs}\label{sec:proofs}

We first introduce two useful known lemmas, and prove the propositions in their order of appearance.

\subsection{Useful lemmas}

First, under \autoref{ass:lip}, we note that the soft $C$-transforms are
 uniformly contracting on the distribution space $\Pp(\Xx)$. This is clarified
 in the following lemma, extracted from \citet{vialard2019elementary},
 Proposition 19. We refer the reader to the original references for proofs.

\begin{lemma}\label{lemma:contractivity}
    Unser \autoref{ass:lip}, let $\kappa = 1 - \exp(-L
    \textnormal{diam}(\Xx))$. For all $\hat \alpha \in \Pp(\Xx)$ and $\hat \beta \in
    \Pp(\Xx)$, for all $f, f', g, g' \in \Cc(\Xx)$,
    \begin{equation}
        {\Vert \Ctrans{f'}{\hat \alpha} - 
        \Ctrans{f'}{\hat \alpha} \Vert}_{\var} \leq \kappa {\Vert f - f' \Vert}_{\var},
        \quad
        {\Vert \Ctrans{g}{\hat \beta} - 
        \Ctrans{g'}{\hat \beta} \Vert}_{\var} \leq \kappa {\Vert g - g' \Vert}_{\var}.
    \end{equation}
\end{lemma}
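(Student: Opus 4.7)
The plan is to linearize the soft $C$-transform into a positive integral operator and then invoke the Birkhoff contraction theorem in Hilbert's projective metric. First, make the change of variables $w \eqdef e^{g}$, $w' \eqdef e^{g'}$, so that the soft $C$-transform factors through the linear positive operator
\begin{equation}
    T_{\hat\beta} : w \mapsto \Big(x \mapsto \int K(x, y)\, w(y)\, d\hat\beta(y)\Big), \qquad K(x,y) \eqdef e^{-C(x,y)},
\end{equation}
as $\Ctrans{g}{\hat\beta} = -\log T_{\hat\beta}[e^g]$. Since the map $h \mapsto e^h$ sends $(\Cc(\Xx), \norm{\cdot}_{\var})$ isometrically onto $(\Cc_+(\Xx), d_H)$, where $d_H(w,w') \eqdef \log \sup_{x} (w/w') - \log \inf_{x} (w/w')$ is Hilbert's projective metric, the lemma reduces to the contraction bound $d_H(T_{\hat\beta}[w], T_{\hat\beta}[w']) \leq \kappa\, d_H(w, w')$, with $\kappa$ independent of $\hat\beta$.

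The key ingredient is Birkhoff's theorem: any positive linear operator contracts $d_H$ with rate $\tanh(\Delta_K/4)$, where the projective diameter of the kernel is
\begin{equation}
    \Delta_K \eqdef \sup_{x,x',y,y' \in \Xx} \log \frac{K(x,y)\, K(x',y')}{K(x,y')\, K(x',y)}.
\end{equation}
This quantity depends only on $K$, which delivers uniformity in $\hat\alpha, \hat\beta$ for free. The $L$-Lipschitzness of $C$ combined with compactness of $\Xx$ then gives
\begin{equation}
    \log \frac{K(x,y) K(x',y')}{K(x,y') K(x',y)} = \bigl(C(x,y') - C(x,y)\bigr) + \bigl(C(x',y) - C(x',y')\bigr) \leq 2L\, \textnormal{diam}(\Xx),
\end{equation}
so $\Delta_K \leq 2L\, \textnormal{diam}(\Xx)$. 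Using the elementary estimate $\tanh(a/2) \leq 1 - e^{-a}$ for $a \geq 0$ then recovers the stated (slightly lossy) constant $\kappa = 1 - e^{-L\, \textnormal{diam}(\Xx)}$. The argument for $\Ctrans{\cdot}{\hat\alpha}$ is identical up to a relabeling.

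The main obstacle is Birkhoff's theorem itself, which is not elementary to prove from scratch: a clean self-contained argument uses the variational characterization $d_H(w,w') = \sup_{x,x'} \log [w(x)w'(x')/(w(x')w'(x))]$ together with a careful rearrangement of four-point ratios inside the integral. Since the excerpt explicitly attributes the estimate to \cite{vialard2019elementary} (Proposition 19), the cleanest strategy is to invoke it as a black box and present only the three steps above, namely the isometry, the application of Birkhoff, and the Lipschitz bound on $\Delta_K$. As a sanity check, $\kappa \to 1$ as $\textnormal{diam}(\Xx) \to \infty$, consistent with the comments in the main text that contraction degenerates in the small-$\varepsilon$ regime (which, by the rescaling $\Ww_{C,\varepsilon} = \varepsilon \Ww_{C/\varepsilon,1}$, corresponds to inflating the effective diameter).
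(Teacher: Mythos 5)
Your argument is correct, and it is worth noting that the paper itself supplies no proof of this lemma at all: it is stated as ``extracted from \citet{vialard2019elementary}, Proposition 19'' with the reader referred to the original reference. You therefore go strictly further than the paper, black-boxing only Birkhoff's theorem rather than the entire statement. The three steps all check out: $h \mapsto e^h$ does carry $\norm{\cdot}_{\var}$ isometrically onto Hilbert's projective metric in its oscillation form, since $d_H(e^h, e^{h'}) = \sup(h-h') - \inf(h-h') = \norm{h-h'}_{\var}$, and $\Ctrans{g}{\hat\beta} = -\log T_{\hat\beta}[e^g]$ reduces the claim to a projective contraction of the positive kernel operator; the four-point bound $\Delta_K \leq 2L\,\textnormal{diam}(\Xx)$ follows from Lipschitzness exactly as you write (each of the two differences $C(x,y')-C(x,y)$ and $C(x',y)-C(x',y')$ is at most $L\,\textnormal{diam}(\Xx)$); and the elementary inequality $\tanh(a/2) = (1-e^{-a})/(1+e^{-a}) \leq 1 - e^{-a}$ correctly converts the Birkhoff rate $\tanh(\Delta_K/4) \leq \tanh(L\,\textnormal{diam}(\Xx)/2)$ into the paper's (lossier) constant $\kappa = 1 - e^{-L\,\textnormal{diam}(\Xx)}$. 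You also correctly identify the feature the paper actually needs downstream, namely that the bound on $\Delta_K$ depends only on the kernel and hence is uniform over all empirical measures $\hat\alpha, \hat\beta$. This Birkhoff--Hopf route is the standard one for Sinkhorn contraction and is essentially how the cited reference proceeds, so your proof is a faithful reconstruction of what the paper omits; the only caveat is that Birkhoff's theorem itself remains unproven, which you acknowledge explicitly and which is no worse than the paper's own level of rigor here.
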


We will also need a uniform law of large numbers for functions. The following lemma is a consequence of Example 19.7 and
Lemma 19.36 of \citet{van_der_vaart_asymptotic_2000}, and is copied in Lemma B.6 in \citet{mairal_stochastic_2013}.

\begin{lemma}\label{lemma:lln}
    Under \autoref{ass:lip}, let $(f_t)_t$ be an i.i.d sequence in $\Cc(\Xx)$,
    such that $\EE[f_0] = f \in \Cc(\Xx)$. Then there exists $A > 0$ such that, for all $n > 0$,
    \begin{equation}
        \EE \sup_{x \in \Xx} | \frac{1}{n} \sum_{i=1}^n f_i(x) - f(x) |
        \leq \frac{A}{\sqrt{n}}.
    \end{equation}
\end{lemma}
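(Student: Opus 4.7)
The plan is to view this as a standard uniform law of large numbers for an empirical process, of the type going back to Glivenko--Cantelli and quantified via Dudley/Pollard chaining. The first observation I would make is that the stated conclusion cannot hold for a totally arbitrary i.i.d. sequence in $\Cc(\Xx)$: one needs the $f_t$ to belong to a $P$-Donsker class. In the way this lemma is used in the rest of the paper, the $f_t$ arise as soft $C$-transforms of uniformly bounded continuous functions, and Assumption \ref{ass:lip} plus the explicit form of $\Ctrans{\cdot}{\mu}$ force them to share a uniform $L'$-Lipschitz modulus and a uniform sup-norm bound. I would therefore begin the proof by stating this uniform Lipschitz/boundedness property as the working hypothesis.

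Next I would symmetrize. Writing $g_i \triangleq f_i - f$, the $g_i$ are i.i.d., centered, and share the same uniform Lipschitz bound. By the Gin\'e--Zinn symmetrization inequality,
\begin{equation}
\EE \sup_{x \in \Xx} \Big| \tfrac{1}{n}\sum_{i=1}^n g_i(x) \Big|
\; \leq \; 2 \, \EE \sup_{x \in \Xx} \Big| \tfrac{1}{n}\sum_{i=1}^n \epsilon_i g_i(x) \Big|,
\end{equation}
where $(\epsilon_i)$ are independent Rademacher signs, independent of $(g_i)$.

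Then I would chain. Conditional on $(g_i)$, the process $x \mapsto \tfrac{1}{\sqrt n}\sum_i \epsilon_i g_i(x)$ is sub-Gaussian with respect to the random empirical pseudo-metric $d_n(x,x')^2 = \tfrac{1}{n}\sum_i (g_i(x)-g_i(x'))^2$, which the uniform Lipschitz bound forces to satisfy $d_n(x,x') \leq L'\, d(x,x')$ almost surely. Dudley's entropy integral then yields
\begin{equation}
\EE \sup_{x} \Big| \tfrac{1}{\sqrt n}\sum_i \epsilon_i g_i(x) \Big|
\; \leq \; K \int_0^{\mathrm{diam}(\Xx)} \sqrt{\log N(\Xx, d, \varepsilon/L')} \, \d\varepsilon,
\end{equation}
and the compactness of $\Xx$ (Assumption \ref{ass:lip}) makes the right-hand side a finite constant that depends only on $L'$, $\mathrm{diam}(\Xx)$, and the metric entropy of $\Xx$. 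Dividing by $\sqrt n$ and combining with the symmetrization step gives the announced $A/\sqrt n$ bound.

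The main obstacle is the very first step: the statement is too loose as written, and the real content lies in identifying the precise uniform regularity class to which the $f_t$ belong. Once this is granted, the bound is exactly the conclusion of Example 19.7 and Lemma 19.36 of \citet{van_der_vaart_asymptotic_2000} (the bracketing-entropy estimate for the Lipschitz ball on a compact metric space, combined with the standard maximal inequality), and the proof can in fact be finished by direct citation, as the authors indicate.
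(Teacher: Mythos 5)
Your argument is correct, and in substance it travels the same empirical-process road as the paper, which writes no proof at all but delegates the lemma to Example 19.7 and Lemma 19.36 of \citet{van_der_vaart_asymptotic_2000} (bracketing entropy of a class that is Lipschitz in its index, plus the associated maximal inequality). The only technical divergence is the route to the entropy integral: you use Rademacher symmetrization followed by Dudley chaining over the index set $\Xx$ with uniform covering numbers, whereas the cited lemmas use bracketing numbers; for a class $\{y \mapsto g(y;x)\}_{x \in \Xx}$ that is uniformly bounded and uniformly Lipschitz in $x$ over a compact $\Xx$, both entropy functionals are controlled by $N(\Xx, d, \varepsilon)$ and yield the same $A/\sqrt{n}$ bound, so nothing is gained or lost either way. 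The genuinely valuable part of your write-up is the opening observation: as literally stated the lemma is false for an arbitrary i.i.d.\ sequence in $\Cc(\Xx)$ with $\EE[f_0] \in \Cc(\Xx)$ --- a $\sqrt{n}$-rate uniform law of large numbers requires the $f_i$ to range in a Donsker class --- and the hypothesis that saves it (a uniform Lipschitz constant and sup-norm bound, supplied by the soft $C$-transforms under \autoref{ass:lip} in every place the lemma is invoked) is left implicit in the paper; you are right to surface it. One small caveat on your chaining step: compactness of $\Xx$ guarantees $N(\Xx,d,\varepsilon) < \infty$ for each $\varepsilon$ but does not by itself make $\int_0^{\mathrm{diam}(\Xx)} \sqrt{\log N(\Xx,d,\varepsilon/L')}\,\d\varepsilon$ finite (it can diverge for compact subsets of infinite-dimensional spaces), so you should either assume polynomial covering numbers, as the paper implicitly does by working in $\RR^d$, or state the finiteness of the entropy integral as a hypothesis; the paper's bracketing route carries exactly the same implicit requirement. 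With that proviso your proof is complete and may indeed, as you note, be replaced by the citation the authors give.
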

Finally, we need a result on running averages using the sequence ${(\eta_t)}_t$. The following result stems from a simple Abel transform of the law of large number, and is established by \citet{mairal_stochastic_2013}, Lemma B.7.

\begin{lemma}\label{lemma:running}
    Let $(\eta_t)_t$ be a sequence of weights meeting \autoref{ass:weights}. Let
    $(X_t)_t$ be an i.i.d sequence of real-valued random variables with existing
    first moment $\EE[X_0]$. We consider the sequence ${(\bar X_t)}_t$ defined
    by $\bar X_0 \triangleq X_0$ and 
    \begin{equation}
        \bar X_t \triangleq (1 - \eta_t) \bar X_{t-1} + \eta_t X_t.
    \end{equation}
    Then $\bar X_t \to_{t \to \infty} \EE[X_0]$.
\end{lemma}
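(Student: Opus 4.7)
The plan is to reduce to the zero-mean case, unroll the recursion into a convex combination of the samples, and then combine the vanishing of the initial-condition factor with a second-moment argument (extended to $L^1$ by truncation).

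First I would set $Y_t := X_t - \EE[X_0]$ and $\bar Y_t := \bar X_t - \EE[X_0]$. By linearity of the recursion, $\bar Y_0 = Y_0$ and $\bar Y_t = (1-\eta_t)\bar Y_{t-1} + \eta_t Y_t$, so it suffices to prove that $\bar Y_t \to 0$ for i.i.d. $(Y_t)$ with $\EE[Y_0] = 0$. Iterating the recursion yields the explicit formula
\begin{equation}
\bar Y_t = P_{0,t}\, Y_0 + \sum_{i=1}^{t} \eta_i P_{i,t}\, Y_i, \qquad P_{i,t} := \prod_{j=i+1}^{t}(1-\eta_j),
\end{equation}
with $P_{t,t}=1$. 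A telescoping identity based on $\eta_i P_{i,t} = P_{i-1,t} - P_{i,t}$ gives $P_{0,t} + \sum_{i=1}^t \eta_i P_{i,t} = 1$, so the coefficients are nonnegative (since $\eta_j \in [0,1]$) and sum to one.

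For the initial-condition factor, the bound $\log(1-\eta_j) \leq -\eta_j$ together with $\sum \eta_j = \infty$ (Assumption 2) yields $P_{0,t} \to 0$, so the term $P_{0,t} Y_0 \to 0$ almost surely. For the main weighted sum I would first assume $\sigma^2 := \text{Var}(X_0) < \infty$. By independence of $Y_t$ and $\bar Y_{t-1}$, the variance obeys $v_t = (1-\eta_t)^2 v_{t-1} + \eta_t^2 \sigma^2$, and since $\eta_t \in [0,1]$ gives $(1-\eta_t)^2 \leq 1 - \eta_t$, one has $v_t \leq (1-\eta_t)v_{t-1} + \eta_t^2\sigma^2$. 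Unrolling and splitting the sum at an arbitrary cutoff $N$, the head $\sum_{i \leq N}\eta_i^2 P_{i,t}^2$ carries the vanishing factor $P_{N,t}^2$ and tends to $0$ as $t \to \infty$, while the tail is bounded by $\sigma^2 \sum_{i>N}\eta_i^2$, which is made arbitrarily small by $\sum \eta_i^2 < \infty$. Hence $v_t \to 0$, giving $\bar Y_t \to 0$ in $L^2$ and therefore in probability.

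To handle the general $L^1$ hypothesis, I would use a truncation at level $M$: write $Y_t = Y_t^{(M)} + R_t^{(M)}$ with $Y_t^{(M)} := Y_t \mathds{1}_{|Y_t|\leq M} - \EE[Y_0 \mathds{1}_{|Y_0|\leq M}]$. The bounded component satisfies the $L^2$ argument above, and the remainder, propagated through the same convex combination, has $L^1$ norm bounded by $2\,\EE|Y_0|\mathds{1}_{|Y_0|>M}$ uniformly in $t$ because the weights sum to $1$. Taking $M$ large then $t$ large yields convergence in $L^1$, hence in probability; this is the mode of convergence in which the lemma is actually used elsewhere in the paper (as in Mairal 2013, Lemma B.7, which the text cites for this result). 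The main subtlety is the telescoping identity establishing that the weights form a probability vector, which is what makes the truncation argument uniform in $t$ and allows the $L^1$ reduction; the rest is routine given Assumption 2.
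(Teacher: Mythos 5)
Your argument is self-contained where the paper's is not: the paper does not prove this lemma, but cites \citet{mairal_stochastic_2013} (Lemma~B.7) and describes the argument as ``a simple Abel transform of the law of large numbers.'' Your structure---unrolling into a convex combination with nonnegative weights $P_{0,t}$ and $\eta_i P_{i,t}$ summing to one, killing the initial condition via $\sum_t\eta_t=\infty$, a variance recursion for the square-integrable case, and a truncation to pass to $L^1$---is sound. (One sign slip: the telescoping identity is $\eta_i P_{i,t} = P_{i,t} - P_{i-1,t}$, not $P_{i-1,t}-P_{i,t}$; your conclusion that the weights form a probability vector is nevertheless correct.)

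The genuine gap is the mode of convergence. Your proof delivers convergence in $L^1$, hence in probability, and you assert that this is the mode in which the lemma is used elsewhere. It is not: \autoref{prop:convergence_approx} is an almost-sure statement, and its proof passes from $e_t \leq r_t$ and ``$r_t \to r_\infty$'' to an eventual pathwise bound $e_t \leq A/\sqrt{n}$, which requires $r_t \to r_\infty$ almost surely (this is also what Mairal's Lemma~B.7 provides). Convergence in probability would only give $\PP[r_t > 2r_\infty]\to 0$, not an eventual bound along almost every trajectory. The repair is precisely the Abel/Kronecker route the paper alludes to: setting $c_i \triangleq \eta_i\prod_{j\leq i}(1-\eta_j)^{-1}$ and $C_t \triangleq \sum_{i\leq t}c_i$, your own unrolled formula factorizes as $\bar Y_t = P_{0,t}\big(Y_0+\sum_{i\leq t}c_iY_i\big)$ with $C_t = 1/P_{0,t}-1\to\infty$, so it suffices that $C_t^{-1}\sum_{i\leq t}c_iY_i\to 0$ almost surely; by Kronecker's lemma this follows from the almost-sure convergence of the $L^2$-bounded martingale $\sum_i (c_i/C_i)Y_i$, since $c_i/C_i = \eta_i/(1-P_{0,i})$ makes $\sum_i(c_i/C_i)^2$ finite whenever $\sum_i\eta_i^2<\infty$. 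This needs a second moment, but that is automatic in the paper's application, where the $X_t$ are the uniformly bounded quantities $\Vert\epsilon_{\hat\beta_t}\Vert_{\var}+\Vert\iota_{\hat\alpha_t}\Vert_{\var}$; your truncation step can then be recycled to extend the statement to the pure $L^1$ hypothesis if one insists on the lemma as stated.
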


\subsection{Proof of \autoref{prop:markov}}\label{sec:proof_markov}

\begin{proof}
    We use Theorem 1 from \citet{diaconis_iterated}. For this, we simply note
    that the space $\Cc(\Xx) \times \Cc(\Xx)$ in which the chain ${x_t
    \triangleq (f_t, g_t)}_t$, endowed with the metric $\rho((f_1, g_1), (f_2,
    g_2)) = \Vert f_1 - f_2 \Vert_{\var} + \Vert g_1 - g_2 \Vert_{\var}$, is
    complete and separable (the countable set of polynomial functions are dense in this space, for example).
    We consider the operator $A_{\theta} \triangleq \Ctrans{\Ctrans{\cdot}{\hat
    \alpha}}{\hat \beta}$. $\theta \triangleq (\hat \alpha, \hat \beta)$ denotes
    the random variable that is sampled at each iteration. We have the following
    recursion:
    \begin{equation}
        x_{t+2} = A_{\theta_t}(x_t).
    \end{equation}
    
    From \autoref{lemma:contractivity}, for all $\hat \alpha \in \Pp(\Xx)$, $\hat \beta \in \Pp(\Xx)$, $A_{\theta}$
    with $\theta = (\hat \alpha, \hat \beta)$ is contracting, with module
    $\kappa_\theta < \kappa < 1$. Therefore
    \begin{equation}
        \int_{\theta} \kappa_\theta \d \mu(\theta) < 1, \qquad \int_{\theta}
         \log \kappa_\theta \d \mu(\theta) < 0.
    \end{equation}
    Finally, we note, for all $f \in \Cc(\Xx)$
    \begin{equation}
        \Vert \Ctrans{\Ctrans{f}{\hat \alpha}}{\beta} \Vert_{\infty} 
        \leq \Vert f \Vert_\infty + 2 \max_{x,y \in \Xx} C(x, y),
    \end{equation}
    therefore $\rho(A_\theta(x_0), x_0) \leq 2 \Vert x_0 \Vert_\infty + 2
    \max_{x,y \in \Xx} C(x, y)$ for all $\theta \ (\hat \alpha, \hat \beta)$.
    The regularity condition of the theorem are therefore met. Each of the
    induced Markov chains ${(f_{2t}, g_{2t})}_t$ and ${(f_{2t + 1}, g_{2t +
    1})}_t$ has a unique stationary distribution. These stationary distributions
    are the same: the stationary distribution is independent of the
    initialisation and both sequences differs only by their initialisation.
    Therefore ${(f_{t}, g_{t})}_t$ have a unique stationary distribution
    $(F_\infty, G_\infty)$.
\end{proof}

\subsection{Proof of \autoref{prop:convergence_approx}}\label{sec:proof_convergence_approx}

For presentation purpose, we first show that the ``slowed-down'' online Sinkhorn algorithm converges in the absence of noise. We then turn to prove \autoref{prop:convergence_approx}.

\subsubsection{Noise-free online Sinkhorn}

\begin{proposition}\label{prop:deterministic}
    We suppose that $\hat \alpha_t = \alpha$, $\hat \beta_t = \beta$ for all
    $t$. Then the updates \eqref{eq:updates} yields a (deterministic) sequence $(f_t, g_t)_t$ such
    that 
    \begin{equation}
        \Vert \hat f_t - f^\star \Vert_{\text{var}} 
        + \Vert \hat g_t - g^\star \Vert_{\text{var}} \to 0,\qquad
        \frac{1}{2} \dotp{\alpha}{f_t + \Ctrans{\hat g_t}{\alpha}} + \dotp{\beta}{\hat g_t + \Ctrans{f_t}{\beta}} 
         \to \Ww(\alpha, \beta).
    \end{equation}
\end{proposition}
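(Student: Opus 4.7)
The plan is to exploit the fixed-point structure of the iteration together with the contractivity of the soft $C$-transform (Lemma \ref{lemma:contractivity}). First, I would rewrite the update in error coordinates. Setting $\phi_t \eqdef \hat f_t - f^\star$ and $\psi_t \eqdef \hat g_t - g^\star$, the noise-free update $e^{-\hat f_{t+1}} = (1-\eta_t)\,e^{-\hat f_t} + \eta_t\,e^{-\Ctrans{\hat g_t}{\beta}}$, divided by $e^{-f^\star}$ and using the fixed-point identity $f^\star = \Ctrans{g^\star}{\beta}$, becomes
\begin{equation}
e^{-\phi_{t+1}} = (1-\eta_t)\,e^{-\phi_t} + \eta_t\,e^{-\tilde\psi_t}, \qquad \tilde\psi_t \eqdef \Ctrans{\hat g_t}{\beta} - \Ctrans{g^\star}{\beta},
\end{equation}
and symmetrically $e^{-\psi_{t+1}} = (1-\eta_t)\,e^{-\psi_t} + \eta_t\,e^{-\tilde\phi_t}$ with $\tilde\phi_t \eqdef \Ctrans{\hat f_t}{\alpha} - \Ctrans{f^\star}{\alpha}$. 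By Lemma \ref{lemma:contractivity}, the soft $C$-transforms contract the variation norm: $\|\tilde\psi_t\|_\var \leq \kappa \|\psi_t\|_\var$ and $\|\tilde\phi_t\|_\var \leq \kappa \|\phi_t\|_\var$ with $\kappa = 1 - e^{-L\,\mathrm{diam}(\Xx)} < 1$.

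Second, I would establish the key variational estimate for the relaxed update, namely
\begin{equation}
\|\phi_{t+1}\|_\var \leq (1-\eta_t)\,\|\phi_t\|_\var + \eta_t\,\|\tilde\psi_t\|_\var,
\end{equation}
and the analogous bound for $\psi_{t+1}$. To this end I would first show by induction that $\hat f_t, \hat g_t$ remain uniformly bounded in $L^\infty$ (using compactness of $\Xx$ and boundedness of $C$), so that the ``gauge'' freedom between the two terms of the convex combination is controlled. Then, bounding the extrema of $(1-\eta_t)\,e^{-\phi_t(x)} + \eta_t\,e^{-\tilde\psi_t(x)}$ by the componentwise extrema and taking logarithms yields the desired non-expansion in the variation norm. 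Setting $\delta_t \eqdef \|\phi_t\|_\var + \|\psi_t\|_\var$ and combining both bounds with the soft $C$-transform contraction gives
\begin{equation}
\delta_{t+1} \leq \bigl(1 - (1-\kappa)\,\eta_t\bigr)\,\delta_t.
\end{equation}

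Third, iterating the recursion yields $\delta_t \leq \delta_0 \prod_{s<t}(1 - (1-\kappa)\,\eta_s)$. Since $\sum_t \eta_t = \infty$ by Assumption~\ref{ass:weights}, this infinite product vanishes, so $\delta_t \to 0$, which is the announced variational convergence of the iterates. For the second claim, the quantity $\frac{1}{2}[\dotp{\alpha}{\hat f_t + \Ctrans{\hat g_t}{\alpha}} + \dotp{\beta}{\hat g_t + \Ctrans{\hat f_t}{\beta}}]$ equals the symmetrized dual objective of \eqref{eq:dual} evaluated at $(\hat f_t, \hat g_t)$. Continuity of $\Ctrans{\cdot}{\mu}$ in $\|\cdot\|_\var$ (a consequence of Lemma \ref{lemma:contractivity}) and continuity of $\dotp{\cdot}{\alpha}, \dotp{\cdot}{\beta}$ together with $(\hat f_t, \hat g_t) \to (f^\star, g^\star)$ in variation norm imply that this expression converges to $F_{\alpha,\beta}(f^\star, g^\star) = \Ww(\alpha, \beta)$ by strong duality.

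The main obstacle is the variational non-expansivity of the relaxed update in the second step: because the iteration is a log of a convex combination rather than a linear combination, a naive sub-additivity argument does not apply directly. Carefully exploiting the $L^\infty$-boundedness of iterates to compare the log of the convex combination to its componentwise extrema is what makes the estimate go through; once that piece is in place the remainder is a standard telescoping argument driven by $\sum \eta_t = \infty$.
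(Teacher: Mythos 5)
Your proposal follows essentially the same route as the paper's proof: the same error coordinates $\hat f_t - f^\star$, $\hat g_t - g^\star$, the same componentwise bound on the extrema of the convex combination of exponentials followed by convexity of $-\log$ (Jensen) to get $\|\phi_{t+1}\|_\var \leq (1-\eta_t)\|\phi_t\|_\var + \eta_t\|\tilde\psi_t\|_\var$, the same use of the contraction Lemma~\ref{lemma:contractivity}, and the same unrolling of $\delta_{t+1}\leq(1-(1-\kappa)\eta_t)\delta_t$ driven by $\sum\eta_t=\infty$. The auxiliary $L^\infty$-boundedness induction you invoke is not actually needed (the max/min inequalities hold for any continuous functions on the compact $\Xx$), and your continuity-plus-strong-duality argument for the objective value is a slightly more explicit version of what the paper leaves implicit; otherwise the two proofs coincide.
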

Note that, as we perform \textit{simultaneous} updates, we only obtain
the convergence of $f_t \to f^\star + A$, and $g_t \to g^\star$, where $f^\star$
and $g^\star$ are solutions of \eqref{eq:wass} and $A$ is a constant depending
on initialization.
%%%

The \enquote{slowed-down} Sinkhorn iterations converge toward an optimal
potential couple, up to a constant factor: this stems from the fact that we
apply contractions in the space $(\Cc(\Xx), {\Vert\cdot\Vert}_{\var})$ with a
contraction factor that decreases sufficiently slowly.

\begin{proof}
    We write ${(f_t, g_t)}_t$ the sequence of iterates. Given a pair of optimal potentials 
    $(f^\star, g^\star)$, we write $u_t \triangleq f_t - f^\star$, $v_t \triangleq g_t - g^\star$,
    $u_t^T \triangleq \Ctrans{f_t}{\alpha} - g^\star$ and $v_t^T \triangleq \Ctrans{g_t}{\alpha} - f^\star$.
    For all $t > 0$, we observe that 
    \begin{align}
        \max u_{t+1} &= - \log \min \exp(-u_{t+1}) \\
        &= - \log \big( \min \big( (1 - \eta_t) \exp(-u_{t}) + \eta_t 
        \exp(-v_t^T) \big) \big)\\
        &\leq - \log \big( (1 - \eta_t) \min \exp(-u_{t}) + \eta_t 
        \min \exp(-v_t^T) \big)\\
        &\leq - (1 - \eta_t) \log \min \exp(-u_{t}) -  \eta_t \log \min
         \exp(-v_t^T) \\
         &= (1 - \eta_t) \max u_t  + \eta_t \max v_t^T,
    \end{align}
    where we have used the algorithm recursion on the second line, $\min f + g \geq \min f + \min g$
     on the third line and Jensen inequality on the fourth line. Similarly
    \begin{equation}
        \min u_{t+1} \geq (1 - \eta_t) \min u_t  + \eta_t \min v_t^T,
    \end{equation}
    and mirror inequalities hold for $v_t$. Summing the four inequalities, we obtain
    \begin{align}\label{eq:et}
        e_{t+1} &\triangleq \Vert u_{t+1} \Vert_{\var} + \Vert v_{t+1} \Vert_{\var} \\ 
        &= \max u_{t+1} - \min u_{t+1} + \max v_{t+1} - \min v_{t+1} \\
        &\leq
        (1 - \eta_t) ( \Vert u_t \Vert_{\var} + \Vert v_t \Vert_{\var})
        + \eta_t ( \Vert u_t^T \Vert_{\var} + \Vert v_t^T \Vert_{\var}), \\
        &\leq
        (1 - \eta_t) ( \Vert u_t \Vert_{\var} + \Vert v_t \Vert_{\var})
        + \eta_t \kappa ( \Vert u_t \Vert_{\var} + \Vert v_t \Vert_{\var}),
    \end{align}
    where we use the contractivity of the soft-$C$-transform, that guarantees that
    there exists $\kappa < 1$ such that $\Vert v_t^T\Vert_{\var} \leq \kappa \Vert
    v_t\Vert_{\var}$ and $\Vert u_t^T\Vert_{\var} \leq \kappa \Vert
    u_t\Vert_{\var}$ \citep{peyre2019computational}.

    Unrolling the recursion above, we obtain
    \begin{equation}
        \log e_t = \sum_{s=1}^t \log(1 - \eta_t (1 - \kappa)) + \log(e_0) \to - \infty,
    \end{equation}
    provided that $\sum \eta_t = \infty$. The proposition follows.
\end{proof}

\begin{proof}[Proof of \autoref{prop:convergence_approx}]
For discrete realizations $\hat \alpha$ and $\hat \beta$, we define the perturbation terms
\begin{equation}
    \epsilon_{\hat \beta}(\cdot) \triangleq
    f^\star - \Ctrans{g^\star}{\hat \beta} ,\qquad
    \iota_{\hat \alpha}(\cdot) \triangleq 
    g^\star - \Ctrans{f^\star}{\hat \alpha},
\end{equation}
so that the updates can be rewritten as
\begin{align}
    \exp(-f_{t+1} + f^\star) &= (1 - \eta_t)
    \exp(-f_{t} + f^\star)
    + \eta_t \exp(-\Ctrans{g_t}{\hat \beta_t} 
    +\Ctrans{g^\star}{\hat \beta_t} + \epsilon_{\hat \beta_t}) \\
    \exp(-g_{t+1} + g^\star) &= (1 - \eta_t)
    \exp(-g_{t} + g^\star)
    + \eta_t \exp(-\Ctrans{f_t}{\hat \alpha_t} 
    +\Ctrans{f^\star}{\hat \alpha_t} + \iota_{\hat \alpha_t}).
\end{align}
We denote $u_t \triangleq -f_{t} + f^\star$, $v_t \triangleq -g_{t} + g^\star$, $u_t^T \triangleq
\Ctrans{f_t}{\hat \beta_t} - \Ctrans{f^\star}{\hat \beta_t}$, $v_t^T \triangleq
\Ctrans{g_t}{\hat \beta_t} - \Ctrans{g^\star}{\hat \beta_t}$. Reusing the same
derivations as in the proof of \autoref{prop:deterministic}, we obtain
    \label{eq:pre_ineq_var}
    \begin{align}
    \Vert u_{t+1} \Vert_{\var} &\leq
    (1 - \eta_t) \Vert u_t \Vert_{\var}
    \\
    &\phantom{=}+ \eta_t \log \big( \max_{x, y \in \Xx}
    \exp(\epsilon_{\hat \beta_t}(x) 
    - \epsilon_{\hat \beta_t}(y)) \exp(v_t^T(x) - v_t^T(y)) \big) \\ 
    &\leq
    (1 - \eta_t) \Vert u_t \Vert_{\var}
    + \eta_t \Vert v_t^T \Vert_{\var}
    + \eta_t \Vert \epsilon_{\hat \beta_t} \Vert_{\var},
\end{align}
where we have used $\max_x f(x) g(x) \leq \max_x f(x) \max_x f(x)$ on the second line. Therefore,
using the contractivity of the soft $C$-transform,
\begin{equation}
    \label{eq:ineq_var}
    e_{t+1} \leq 
    (1 - \tilde \eta_t) e_t
    +  \frac{\tilde \eta_t}{1 -\kappa}
    ({\Vert \epsilon_{\hat \beta_t} \Vert}_{\var} + 
    {\Vert \iota_{\hat \alpha_t} \Vert}_{\var}),
\end{equation}
where we set $e_t \triangleq \Vert u_t \Vert_{\var} + \Vert v_t \Vert_{\var}$,
$\tilde \eta_t = \eta_t (1-\kappa)$ and $\kappa$ is set to be the biggest
contraction factor over all empirical realizations $\hat \alpha_t$, $\hat
\beta_t$ of the distributions $\alpha$ and $\beta$. It is upper bounded by $1 -
e^{- L\text{diam}(\Xx)}$, thanks to \autoref{ass:lip} and
\autoref{lemma:contractivity}.

The realizations $\hat \beta_t$ and $\hat \alpha_t$
are sampled according to the same distribution $\hat \alpha$ and $\hat \beta$. We
define the sequence $r_t$ to be the running average of the variational norm of the
(functional) error term:
\begin{equation}
    r_{t+1} \triangleq (1 - \tilde \eta_t) r_t + \frac{\tilde \eta_t}{1 - \kappa}
    ({\Vert \epsilon_{\hat \beta_t} \Vert}_{\var} + 
    {\Vert \iota_{\hat \alpha_t} \Vert}_{\var}).
\end{equation}
We thus have, for all $t > 0$, $e_t \leq r_t$. Using \autoref{lemma:running}, the sequence $(r_t)_t$
converges towards the scalar expected value
\begin{equation}\label{eq:r_infty}
    r_\infty \triangleq \frac{1}{1 - \kappa} \EE_{\hat \alpha, \hat \beta}[{\Vert \epsilon_{\hat \beta} \Vert}_{\var}
    + {\Vert \iota_{\hat \alpha} \Vert}_{\var}] > 0.
\end{equation}
We now relate $r_\infty$ to the number of samples $n$ using
a uniform law of large number result on parametric functions.
We write $\hat \beta = \hat \beta_n$ to make explicit the dependency of the
quantities on the batch size $n$.

Using \autoref{lemma:lln}, we bound the quantity
\begin{align}
    E_n &\triangleq \EE_{\hat \beta_n} {\Vert \epsilon_{\hat \beta_n} \Vert}_{\var}
     = \EE_{\hat \beta_n} {\Vert \exp(-\Ctrans{g^\star_0}{\beta})
    - \exp(-\Ctrans{g^\star_0}{\hat \beta_n}) \Vert}_{\infty} \\
    &= \EE_{Y_1, \dots Y_n \sim \beta} 
    \sup_{x \in \Xx}
     \Big| \frac{1}{n} \sum_{i=1}^n \exp(g^\star(Y_i)) - C(x, Y_i)) \\
      &\phantom{=}\qquad\qquad\qquad\quad- \EE_{Y \sim \beta}[\exp(g^\star_0(Y)) - C(x, Y)] \Big| \\
      &= \EE \sup_{x \in \Xx} | \frac{1}{n} \sum_{i=1}^n \phi_i(x) - \phi(x) | ,
\end{align}
where we have defined $\phi_i: x \to \exp(g^\star(Y_i) - C(x, Y_i))$ and set
$\phi$ to be the expected value of each $\phi_i$. The compactness of $\Xx$
ensures that the functions  are square integrable and uniformly bounded.
\autoref{lemma:lln} ensures that there exists $S(g^\star)$ such that
\begin{equation}
    E_n \leq \frac{S(g^\star)}{\sqrt{n}}.
\end{equation}
We now bound $\EE_{\hat \beta_n} {|| \epsilon_{\hat \beta_n} ||}_\var$ using the
 quantity $E_n$. First, we observe that $\Vert_{\var} = g^\star_{\min} < g^\star
 < 0$, and there exists $C_{\max} > 0$ such that $0 \leq C(x, y) \leq C_{\max}$
 for all $x, y \in \Xx$, thanks to the \autoref{ass:lip}.
 \begin{align}
    \delta \triangleq \exp(-\Vert g^\star \Vert_{\var}
     - C_{\max}) &\leq \exp(-T_\beta(g^\star)) \leq 1 \\
    \exp(-\Vert g^\star \Vert_{\var}
     - C_{\max}) &\leq \exp(-T_{\hat \beta_n} 
    (g^\star)) \leq 1,
\end{align}
where we have used $g^\star = \Vert g^\star \Vert_{\var}$.
For all $x \in \Xx$, 
\begin{equation}\label{eq:expression}
    | \epsilon_{\hat \beta_n} | = 
    | \log \frac{\exp(-T_{\hat \beta_n}(g^\star))}{\exp(-T_\beta(g^\star))} | =
    \Big| \log\big(1 + 
    \frac{\exp(-\Ctrans{g^\star}{\hat \beta_n})
    -\exp(-\Ctrans{g^\star}{\beta})
    }
    {\exp(-\Ctrans{g^\star}{\beta})}\big) \Big|.
\end{equation}
We first obtain an upper-bound independent of $n$ with the first equality in
 \eqref{eq:expression}:
\begin{equation}\label{eq:const_ineq}
    {|| \varepsilon_{\hat \beta_n} ||}_{\var} \leq {|| \varepsilon_{\hat \beta_n} ||}_{\infty} \leq 
    {\Vert g^\star \Vert}_{\var} + C_{\max}.
\end{equation}
 We now use the second expression in \eqref{eq:expression}: for $n$ large enough, $E_n < \delta$
\begin{equation}\label{eq:log_ineq}
    {|| \varepsilon_{\hat \beta_n} ||}_{\var} \leq \max ( \log(1 + \frac{E_n}{\delta}),
     - \log(1 - \frac{E_n}{\delta}) ) = 
    - \log(1 - \tilde E_n),
\end{equation}
where we have set $\tilde E_n \triangleq \frac{E_n}{\delta}$. On the event
$\Omega_n = \{\tilde E_n \leq \frac{1}{2}\}$, a simple calculation gives $- \log(1
- \tilde E_n) \leq (2 \log 2) \tilde E_n \leq 2 \tilde E_n$. Thanks to Markov inequality,
$\PP[\tilde E_n > \frac{1}{2}] \leq 2 \EE[\tilde E_n]$. We then split the
expectation over the event $\Omega_n$, and use inequalities \eqref{eq:log_ineq}
and \eqref{eq:const_ineq} on each conditional expectation:
\begin{align}\label{eq:vdv}
    \EE {|| \varepsilon_{\hat \beta_n} ||}_{\var}  &= \PP\left[\tilde E_n \leq \frac{1}{2}\right] 
    \EE \left[ {|| \varepsilon_{\hat \beta_n} ||}_{\var}
    \Big| \tilde E_n \leq \frac{1}{2}\right] 
    \\
    &\phantom{=}
    + \PP\left[\tilde E_n > \frac{1}{2}\right]      \EE \left[ {|| \varepsilon_{\hat \beta_n} ||}_{\var}
    \Big| \tilde E_n > \frac{1}{2}\right] \\
    &\leq \frac{2 \phi({\Vert g^\star \Vert}_{\var} + C_{\max}) 
    S(g^\star)}{\sqrt{n}}\\
    &\leq \frac{4 \exp({\Vert g^\star \Vert}_{\var} + C_{\max}) 
    S(g^\star)}{\sqrt{n}} \triangleq \frac{A(g^\star)}{\sqrt{n}}
\end{align}
The constants $S$ depends on the complexity of estimating
the functional $x \to \int_y \exp(g^\star(y) - C(x,y)) \d \beta(y)$ with samples from $\beta$.
A parallel
result holds for $\EE_{\hat \alpha_n} {\Vert \iota_{\hat \alpha_n}
\Vert}_{\var}$. Therefore, there exists $A(f^\star), A(g^\star) > 0$ such that $r_\infty \leq
\frac{A(f^\star) + A(g^\star)}{\sqrt{n}}$. As for all $t >0$, $e_t \leq r_t \to_{t \to \infty}
r_\infty$, the proposition follows,  writing $A = A(f^\star) + A(g^\star)$.

The constant $A$ is larger than $\exp(C_{\max})$ when $C_{\max} \to
\infty$; Hence it behaves at least like $\exp(\frac{1}{\varepsilon})$ when~$\varepsilon
\to 0$.

Note that we have used twice a corollary of the law of large numbers: once when
averaging over $t$ with $t \to \infty$ (Eq. \eqref{eq:r_infty}), and once when
averaging over $n$ with $n$ finite (Eq. \eqref{eq:vdv}).
\end{proof}

\subsection{Proof of \autoref{prop:convergence_true}}\label{sec:proof_prop_convergence}

In the proof of \autoref{prop:convergence_approx} and in particular Eq. \eqref{eq:ineq_var}, the term that prevents the convergence of $e_t$ is
\begin{equation}
    \eta_t ({\Vert \epsilon_{\hat \beta_t} \Vert}_{\var} + 
    {\Vert \iota_{\hat \alpha_t} \Vert}_{\var}),
\end{equation}
which is not summable in general. We can control this term by increasing the
 size of $\hat \alpha_t$ and $\hat \beta_t$ with time, at a sufficient rate: this is what \autoref{ass:double_weights} ensures.

\begin{proof}
From Eq. \eqref{eq:ineq_var}, for all $t > 0$, we have
\begin{equation}\label{eq:Et_rec}
    0 \leq e_{t+1} \leq 
    (1 - \tilde \eta_t) e_t
    + \eta_t
    ({\Vert \epsilon_{\hat \beta_t} \Vert}_{\var} + 
    {\Vert \iota_{\hat \alpha_t} \Vert}_{\var}).
\end{equation}
Taking the expectation and using the uniform law of 
large number \eqref{eq:vdv},
\begin{align}\label{eq:recursion}
    \EE e_{t+1} &\leq (1 - (1 - \kappa) \eta_t) \EE e_t + 
    \eta_t \frac{A}{\sqrt{n(t)}} \\
    &= (1 - (1 - \kappa) \eta_t) \EE e_t + 
    A \eta_t w_t,
\end{align}
where we have used the definition of $n(t)$ from \autoref{ass:double_weights} in the last line.

The proof follows from a simple asymptotic analysis of the sequence ${(\EE e_t)}_t$, following recursion \eqref{eq:recursion}.
For all $t > 0$, 
\begin{equation}\label{eq:Et_rec2}
    \EE e_{t+1} - \EE e_t = - (1 - \kappa) \eta_t \EE e_t + A \eta_t w_t
     \leq A \eta_t w_t
\end{equation}
Therefore, from \autoref{ass:double_weights}, ${(\EE e_{t+1} - \EE e_t)}_t$ is
summable and $\EE e_t \to_{t \to \infty} \ell \geq 0$. Let's assume $\ell > 0$.
Summing \eqref{eq:Et_rec2} over $t$, we obtain
\begin{equation}
    \EE e_t \leq \EE e_1 - (1 - \kappa) \sum_{s=1}^{t-1} \eta_s \EE_s 
    + A \sum_{s=1}^{t-1} \eta_s w_s \to_{t \to \infty} - \infty,
\end{equation}
which leads to a contradiction. Therefore $\EE e_t \to_{t \to \infty} 0$. As
$e_t \geq 0$ for all $t > 0$, this implies that $e_t \to_{t \to \infty} 0$
almost surely.
\end{proof}

\subsection{Proof of \autoref{prop:rate}}\label{app:proof_rate}

\begin{proof}
The proof of \autoref{prop:convergence_true} allows us to derive non-asymptotic rates for potential estimations using the online Sinkhorn algorithm. Let us set $\eta_t = \frac{\lambda}{t^a}$, $n(t) = \lceil B t^{2b} \rceil$ in \eqref{eq:Et_rec}, so that \autoref{ass:double_weights} is met.
$\lceil \cdot \rceil$ denotes the ceiling function.
We are left to study the recursion \eqref{eq:recursion}:
\begin{equation}
    \delta_{t+1} \triangleq \EE e_{t+1} \leq (1 - \frac{\lambda(1 - \kappa)}{t^a}) \delta_t + 
    \frac{A \lambda}{\sqrt{B}{t^{a + b}}}
\end{equation}

Following the derivations of \citet[Theorem 2]{moulines_non-asymptotic_2011}, we have the following
bias-variance decomposed upper-bound,
provided that $0 \leq a < 1$ and $a+ b > 1$. For all $t > 0$,
% \begin{equation}
%     \delta_t \leq (\delta_0 + \frac{A S}{\sqrt{B}} \phi_{1 - (a + b)}(t))
%     \exp(- \frac{S(1 - \kappa)}{2} n^{1 - a})
%     + \frac{2 A S}{\sqrt{B}(1 - \kappa) n^a}, 
% \end{equation}
% where $\phi_c(t) = \frac{t^c - 1}{c}$ for all $c \neq 0$, $\phi_0(t) = \log t$.
% Note that $\phi_{1 - (a + b)} \leq \frac{1}{a + b - 1}$, so that
\begin{equation}\label{eq:rates}
    \delta_t \leq (\delta_0 + \frac{A S}{(a + b - 1)\sqrt{B}})
    \exp(- \frac{S(1 - \kappa)}{2} t^{1 - a})
    + \frac{2 A S}{\sqrt{B}(1 - \kappa) t^a}.
\end{equation}
Let us now relate the iteration number $t$ to the number of seen sample $N$. By definition
\begin{equation}
    n_t = \sum_{s=1}^t n(s) \leq B \sum_{s=1}^t s^{2b} + t \leq
     t + \frac{(t+1)^{2b + 1} - 1}{2b + 1}
     \leq (2t)^{2b+1}.
\end{equation}
Therefore, when we have seen $N$ samples, the iteration number is superior to $t(N)$, and the expected error $\delta_N$ is of the order of $\delta_{t(N)}$, with
\begin{equation}\label{eq:tn}
    t(N) =  {(N/2)}^{\frac{1}{2b + 1}}.
\end{equation}
We write $\delta_N = \delta_{t(N)}$. Replacing \eqref{eq:tn} in \eqref{eq:rates} yields
\begin{equation}\label{eq:non-asymptotic}
    \delta_n \leq 
    (\delta_0 + \frac{A \lambda}{(a + b - 1)\sqrt{B}})
    \exp\left(- \frac{\lambda(1 - \kappa)}{2} {(n /2)}^{\frac{1 - a}{2b+1}}\right)
    + \frac{2 A \lambda}{\sqrt{B}(1 - \kappa) {(n/2)}^{\frac{a}{2b+1}}}.
\end{equation}
We note that $b$ and $a$ should be as close to $0$ as possible to reduce the bias term, while $a$
should be as close to $1$ and $b$ as close to $0$ as possible to reduce the variance
term. Of course, $b$ should remain larger than $1 - a$ to ensure convergence.

To obtain the best asymptotical rates (the error is always dominated by the variance term), we set $a = 1 - \iota$, $ b = 2 \iota$, with $\iota \succsim 0$. This yields
\begin{align}\label{eq:D_constant}
    \delta_n &\leq 
    (\delta_0 + \frac{A \lambda}{\iota \sqrt{B}})
    \exp\left(- \frac{\lambda(1 - \kappa)}{2} {(n/2)}^{\frac{\iota}{1 + 4\iota}}\right)
    + \frac{2 A \lambda}{\sqrt{B}(1 - \kappa) {(n/2)}^{\frac{1 - \iota}{1 + 4\iota}}} \\
    &= \Oo(n^{-\frac{1 - \iota}{1 + 4 \iota}}). 
\end{align}

This rate is as close to the rate $\Oo(\frac{1}{n})$ as desired. We may then
perform a last soft $C$-transform (using the $n_t$ seen samples) over the
estimated $f_{t(n)}, g_{t(n)}$ to obtain a estimated solution of the dual
optimisation problem \eqref{eq:dual}. The Sinkhorn potentials can therefore be
estimated with \textit{fast rates}. Note that the upper bound explodes when $\varepsilon
\to 0$, as $C_{\max} \to \infty$, hence $A \to \infty$, and $(1 - \kappa) \to 0$.
\end{proof}

\paragraph{Estimating the Sinkhorn distance.} The Sinkhorn distance requires to estimate the integral
\begin{equation}
    \Ww(\alpha, \beta) = \int_x f^\star(x) \d \alpha(x) + \int_y g^\star(y) \d \beta(y).
\end{equation}
At iteration $t(n)$, with empirical realization $\bar \alpha_t$ and $\bar
\beta_t$, containing $n$ samples, we use the estimator
\begin{equation}
    \hat \Ww(\alpha, \beta) = \frac{1}{n} \sum_{i=1}^n f_{t(n)}(x_i) + \frac{1}{n} \sum_{i=1}^n g_{t(n)}(y_i),
\end{equation}
We can bound the estimation error $|\hat \Ww(\alpha, \beta) - \Ww(\alpha, \beta)
| = \Oo(\frac{1}{\sqrt{n}})$, dominated by the integral evaluation noise. We
thus recover a new estimator of the Sinkhorn distance with the same sample
complexity as the batch Sinkhorn estimator \citep{2019-Genevay-aistats}. Our
estimator enjoys an original rate for estimating the potentials in $\Vert \cdot
\Vert_{\var}$.

\pagebreak
\section{Online Sinkhorn variants}

\subsection{Fully-corrective scheme}\label{app:fully_corrective}

We report the fully-corrective online Sinkhorn algorithm in \autoref{alg:full_corrective}. This algorithm also enjoys almost sure convergence, provided that the following assumption is met.

\begin{algorithm}[t]
    \begin{algorithmic}
    \State \textbf{Input:} Distribution $\alpha$ and $\beta$, learning weights ${(\eta_t)}_t$ and batch-sizes ${(n(t))}_t$. 
    \textbf{Set} $p_{i,1} = q_{i,1} = 0$ for $i \in (0, n_1]$
    \For{$t= 0, \dots, {T-1}$}
        \State Sample $(x_i)_{(n_t, n_{t+1}]} \sim \alpha$, $(y_j)_{(n_t, n_{t+1}]} \sim \beta$.
            \State Evaluate $(\hat f_t(x_i))_{i=(0, n_{t+1}]}$,
             $(\hat g_t(y_i))_{i=(0, n_{t+1}]}$ using $(q_{i,t}, p_{i,t}, x_i, y_i)_{i=(0,n_{t}]}$ in \eqref{eq:param}.
             \State $q_{(0, n_{t+1}],t+1} {\gets} \log \frac{1}{n}
             + (\hat g_t(y_i))_{(0, n_{t+1}]}$,
             \qquad $p_{(n_t, n_{t+1}],t+1} {\gets} \log \frac{1}{n} 
             + (\hat f_t(x_i))_{(n_t, n_{t+1}]}$.
    \EndFor
    \State \textbf{Returns:} $\hat f_T : (q_{i,T}, y_i)_{(0, n_T]}$ and
    $\hat g_T : (p_{i,T}, x_i)_{(0, n_T]}$
    \end{algorithmic}
    \vspace{-.4em}
    \caption{Fully-corrective online Sinkhorn}\label{alg:full_corrective}
\end{algorithm}

\begin{algorithm}[t]
    \begin{algorithmic}
    \Input Distribution $\alpha \in \triangle^N$ and 
    $\beta \in \triangle^N$, $x \in \RR^{n \times d}$, 
    $y \in \RR^{n \times d}$, learning weights ${(\eta_t)}_t$
    \State \textbf{Set} $p = q = - \infty \in \RR^n$.
    \For{$t= 1, \dots, {T}$}
        \State $q \gets q + \log(1 - \eta_t)$, $p \gets p + \log(1 - \eta_t)$.
        \State Sample $J_t \subset [1, N]$, $I_t \subset [1, N]$ of size $n(t)$.
        \For{$i \in J_t$}
            \State $q_i \gets \log \Big( \exp(q_i)
            + \exp\big(\log(\eta_t) - \log \frac{1}{N} 
            \sum_{j=1}^{N} \exp(p_j - C(x_j, y_i)\big) \Big) $.
        \EndFor
        \For{$i \in I_t$}
        \State $p_i \gets \log \Big( \exp(q_i)
        + \exp \big( \log(\eta_t) - \log \frac{1}{M} 
        \sum_{j=1}^{M} \exp(q_j - C(x_i, y_j)\big) \Big)$.
        \EndFor
    \EndFor
    \State Returns $f_T : (q, y)$ and
    $g_T : (p, x)$
    \end{algorithmic}
    \caption{Online Sinkhorn potentials in the discrete setting}\label{alg:discrete_online}
\end{algorithm}

\begin{assumption}\label{ass:total_growing}
    For all $t > 0$, the \textit{total} batch-size $n_t = \frac{B}{w_t^2}$ is an
        integer. The step-size $\eta_t$ and the batch-size $n_t$ grows so that
        $\sum w_t \eta_t < \infty$ and $\sum \eta_t = \infty$.
\end{assumption}

With full correction, the total number of observed samples $n_t$ needs to grow
at the same rate as the single-iteration batch-size $n(t)$ in
\autoref{ass:double_weights}. For $\eta_t = \frac{1}{t^a}$, $a \in (1/2, 1]$, it
is sufficient to use a constant batch-size $n(t) = B$ to meet \autoref{ass:total_growing}. We then have the following property

\begin{proposition}
    Under \autoref{ass:lip} and
    \ref{ass:total_growing}, the fully-corrective online Sinkhorn algorithm converges almost surely:
    \begin{equation}
        \Vert \hat f_t - f^\star \Vert_{\var} + \Vert \hat g_t - g^\star \Vert_{\var} \to 0.
    \end{equation}
\end{proposition}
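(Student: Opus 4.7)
The plan is to mirror the proof of \autoref{prop:convergence_true}, with the key substitution that at iteration $t$ the noise is governed by the \emph{cumulative} empirical measures $\bar\alpha_{t+1} = \frac{1}{n_{t+1}} \sum_{i=1}^{n_{t+1}} \delta_{x_i}$ and $\bar\beta_{t+1}$, rather than by a fresh mini-batch $\hat\alpha_t, \hat\beta_t$ of size $n(t)$. Concretely, the fully-corrective update of \autoref{alg:full_corrective} rewrites in exponential form as $e^{-\hat f_{t+1}} = (1-\eta_t) e^{-\hat f_t} + \eta_t e^{-\Ctrans{\hat g_t}{\bar\beta_{t+1}}}$ and symmetrically for $\hat g_{t+1}$; if the algorithm corresponds to $\eta_t = 1$ one simply specialises the argument.

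First I would reproduce, line by line, the algebraic derivation of inequality~\eqref{eq:ineq_var} in the proof of \autoref{prop:convergence_approx}. That derivation uses only (i) the structural form of the update above, (ii) the Jensen and $\min(f+g)\geq \min f + \min g$ tricks, and (iii) the uniform contractivity of the soft $C$-transform (\autoref{lemma:contractivity}). Substituting $\bar\beta_{t+1}$ for $\hat\beta_t$ and $\bar\alpha_{t+1}$ for $\hat\alpha_t$ throughout, one obtains the pathwise recursion
\begin{equation}
    e_{t+1} \;\leq\; (1 - (1-\kappa)\eta_t)\, e_t \;+\; \eta_t\bigl(\|\epsilon_{\bar\beta_{t+1}}\|_{\var} + \|\iota_{\bar\alpha_{t+1}}\|_{\var}\bigr),
\end{equation}
where $\epsilon_{\bar\beta_{t+1}} \eqdef f^\star - \Ctrans{g^\star}{\bar\beta_{t+1}}$ and $\iota_{\bar\alpha_{t+1}} \eqdef g^\star - \Ctrans{f^\star}{\bar\alpha_{t+1}}$, and $e_t \eqdef \|\hat f_t - f^\star\|_{\var} + \|\hat g_t - g^\star\|_{\var}$.

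Next I would apply \autoref{lemma:lln} to $\bar\beta_{t+1}$ and $\bar\alpha_{t+1}$: since these are still i.i.d.\ samples of size $n_{t+1}$ from $\beta$ and $\alpha$ respectively, the same calculation as in \eqref{eq:vdv} yields $\EE\|\epsilon_{\bar\beta_{t+1}}\|_{\var} + \EE\|\iota_{\bar\alpha_{t+1}}\|_{\var} \leq A/\sqrt{n_{t+1}} = (A/\sqrt{B})\, w_{t+1}$. Taking expectations in the pathwise recursion gives
\begin{equation}
    \EE e_{t+1} \;\leq\; (1 - (1-\kappa)\eta_t)\, \EE e_t \;+\; \tfrac{A}{\sqrt{B}}\, \eta_t w_{t+1},
\end{equation}
which is structurally identical to \eqref{eq:recursion}. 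Under \autoref{ass:total_growing}, the summability $\sum \eta_t w_t < \infty$ and the divergence $\sum \eta_t = \infty$ are precisely the hypotheses used in the tail analysis of \autoref{prop:convergence_true}; that analysis (showing $\EE e_t$ must tend to a limit, which cannot be positive) transfers verbatim and promotes to almost sure convergence by the same positivity argument.

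The main obstacle is subtler than it looks: successive $\bar\beta_t$ and $\bar\beta_{t+1}$ are strongly correlated, since $\bar\beta_{t+1}$ contains all the samples of $\bar\beta_t$, so the noise terms $\{\epsilon_{\bar\beta_t}\}_t$ are neither independent nor even a martingale difference sequence. Fortunately, the argument above only ever needs a marginal control of $\EE\|\epsilon_{\bar\beta_t}\|_{\var}$ at each fixed $t$, which requires only that $\bar\beta_t$ itself be an i.i.d.\ sample from $\beta$---this is exactly the setting in which \autoref{lemma:lln} applies. Thus the cross-iteration dependence of the sample path never enters the bound, and the proof of \autoref{prop:convergence_true} goes through with $w_t$ now parametrising the growth of the total sample count $n_t$ rather than the per-step batch size $n(t)$, which explains why a constant batch size $n(t) = B$ suffices provided $\eta_t$ is chosen consistently.
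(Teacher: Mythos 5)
Your proposal is correct and is essentially the paper's own argument: the paper's proof consists precisely of replacing the per-iteration batch size $n(t)$ by the cumulative sample count $n_t$ in the recursion \eqref{eq:recursion} and rerunning the tail analysis of \autoref{prop:convergence_true} under \autoref{ass:total_growing}. Your additional observation—that the cross-iteration correlation of the cumulative measures $\bar\beta_t$ is harmless because only the marginal bound $\EE\Vert\epsilon_{\bar\beta_t}\Vert_{\var}\leq A/\sqrt{n_t}$ enters the expected recursion—is a point the paper leaves implicit, and it is handled correctly.
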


\begin{proof}
    Using the fully-corrective scheme allows to replace $n(t)$ by $n_t =
    \sum_{s=0}^t n(s)$ in \eqref{eq:recursion}. The proposition is then obtained
    in the same way as \autoref{prop:rate}.
\end{proof}

\subsection{Online Sinkhorn for discrete distributions}\label{sec:sinkhorn_discrete}

The online Sinkhorn algorithm takes a simpler form with discrete
distributions. We derive it in \autoref{alg:discrete_online}. We set $\alpha$
and $\beta$ to have size $N$ and $M$, respectively. We evaluate
the potentials as
\begin{align}
    g_t(y) &= - \log \sum_{j=1}^N \exp(p_j - C(x_j, y)) \\
    f_t(x) &= - \log \sum_{j=1}^M \exp(q_j - C(x, y_j)), \\
\end{align}

\vspace{-2em}
where $(p_j)_{J \in [1, N]}$ and $(q_j)_{J \in [1, M]}$ are fixed-size vectors.
Note that the computations written in \autoref{alg:discrete_online} are
in log-space,as they should be implemented to prevent numerical overflows. The sets $|I|$ and $|J|$ can
have varying sizes along the algorithm, which allows for example to speed-up the
initial Sinkhorn iteration (\autoref{sec:accelerating}). In this case, the
cost matrix $\hat C = C(x_i,y_j))_{i,j}$ should be progressively recorded along the algorithm iterations.

\if\icml1
\subsection{Recapitulation on batch-sizes and learning rates}

To provide practical guidance on choosing rates in batch-sizes $n(t)$ and
step-sizes $\eta_t$, we can parametrize $\eta_t = \frac{1}{t^a}$ and $n(t) = B
t^b$ and study what is implied by \autoref{ass:double_weights} and
\autoref{ass:total_growing}. We summarize the schedules for which convergence is
guarantees in \autoref{table:growing}. Note that in practice, it is useful to
replace $t$ by $(1 + r\, t)$ in these schedules. We set $r=0.1$ in all
experiments.
\setcounter{table}{1}
\begin{table}[t]
    \centering
    \caption{Schedules of batch-sizes and learning rates that ensures online Sinkhorn convergence.}

    \begin{tabular}{ccc}
        \toprule
        Param. schedule &Online Sinkhorn & Fully-corrective online Sinkhorn \\
        \midrule
        Batch size $\displaystyle n(t) = B t^b$ & $0 < b $ & $0 \leq b$ \\
        Step size $\displaystyle \eta_t = \frac{1}{t^a}$ &$\displaystyle a \geq 1 - \frac{b}{2}$ & 
        \parbox{5cm}{
        \begin{equation}
            \left\{
                \begin{aligned}
            a &> \frac{1}{2} - \frac{b}{2} \qandq b <1 \\
            a &\geq 0 \qandq b \geq 1
                \end{aligned}
                \right.
        \end{equation}}\\
        \bottomrule
    \end{tabular}
    \label{table:growing}
\end{table}
\fi
\vfill
\pagebreak

\section{Extra numerical experiments}

We display and describe the supplementary figures mentionned in the main text, as well as experimental details useful for reproduction.

\subsection{Online Sinkhorn and variants}\label{app:online_exp}

\begin{figure}[t]
    \begin{widepage}
        
    \centering
    $\varepsilon = 0.1$

    \includegraphics[width=\linewidth]{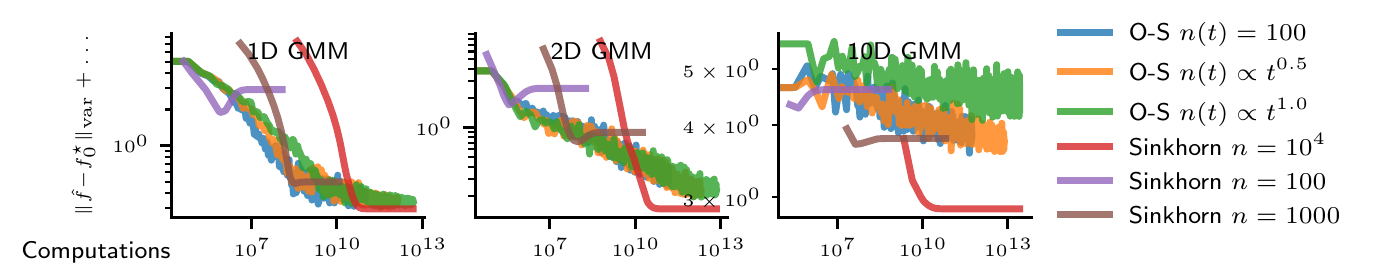}
    
    $\varepsilon = 0.01$

    \includegraphics[width=\linewidth]{online_0_01_False_test.pdf}

    $\varepsilon = 0.001$

    \includegraphics[width=\linewidth]{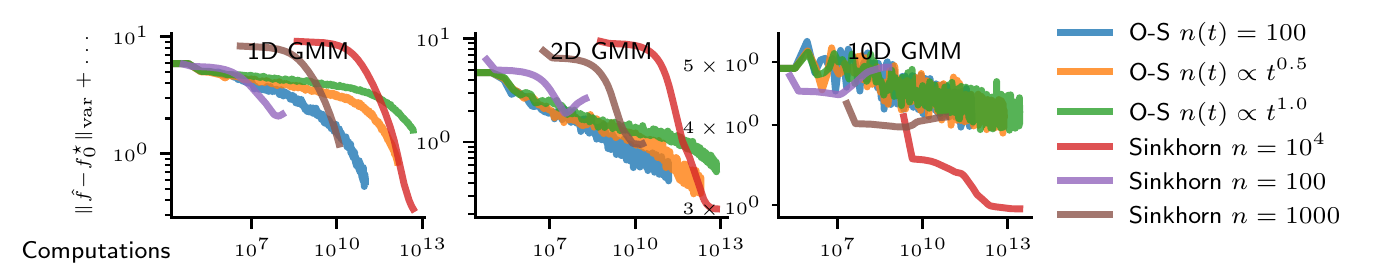}

    $\varepsilon = 0.0001$

    \includegraphics[width=\linewidth]{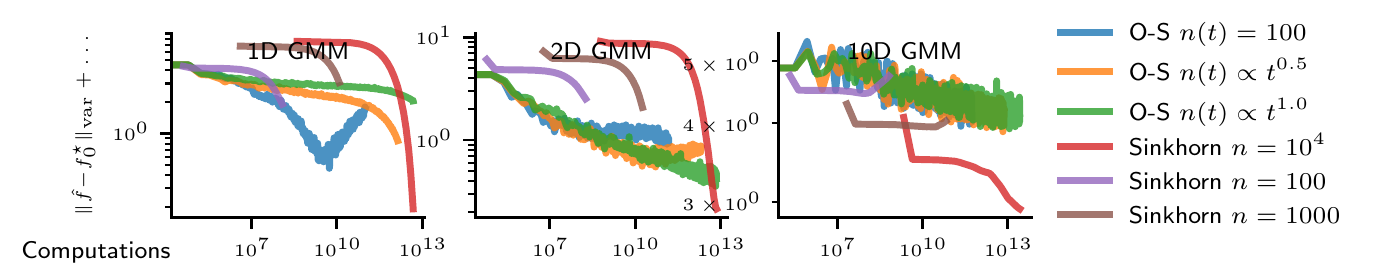}
    \end{widepage}
    \caption{Performance of online Sinkhorn for various $\varepsilon$.}
    \label{fig:convergence_all}
\end{figure}

\begin{figure}[t]
    \begin{widepage}
    \centering
    $\varepsilon = 0.1$

    \includegraphics[width=\linewidth]{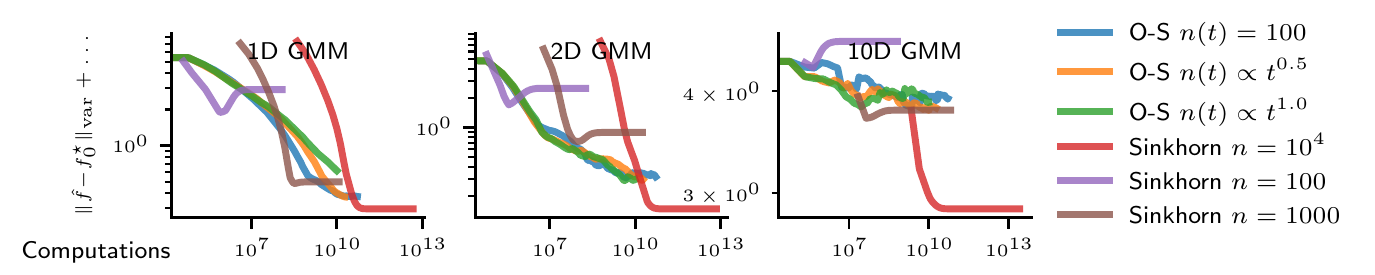}
    
    $\varepsilon = 0.01$

    \includegraphics[width=\linewidth]{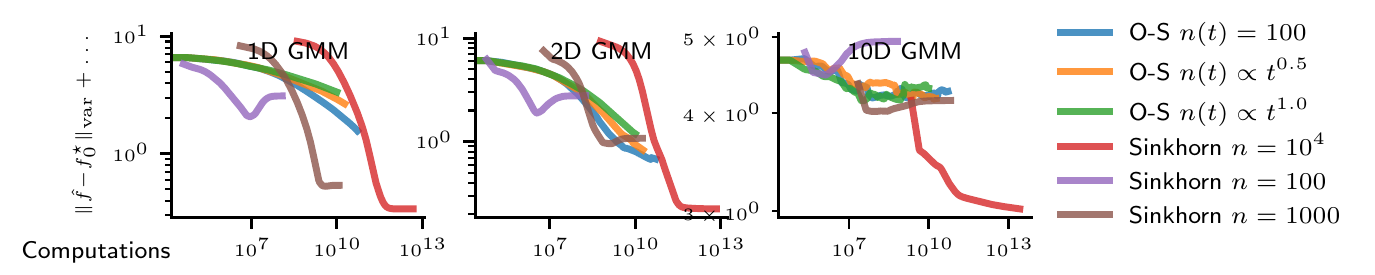}

    $\varepsilon = 0.001$

    \includegraphics[width=\linewidth]{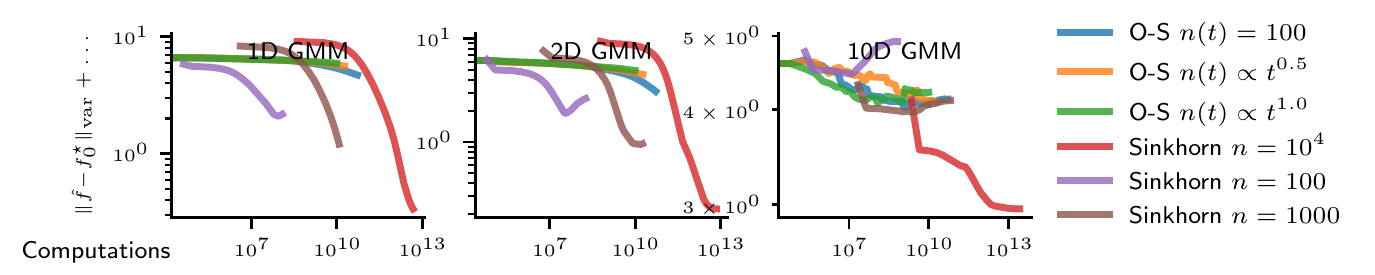}

    $\varepsilon = 0.0001$

    \includegraphics[width=\linewidth]{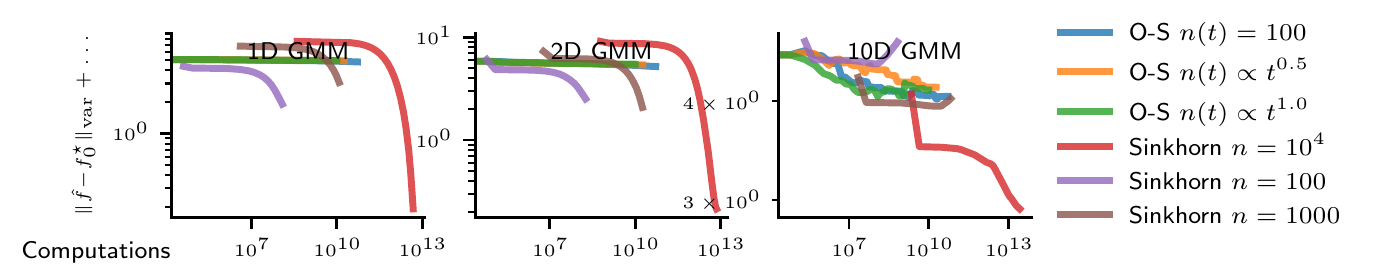}
    \end{widepage}

    \caption{Performance of fully-corrective online Sinkhorn (O-S) for various $\varepsilon$.}
    \label{fig:convergence_refit}
\end{figure}

\begin{figure}[t]
    \begin{widepage}
    \centering
    \includegraphics[width=\linewidth]{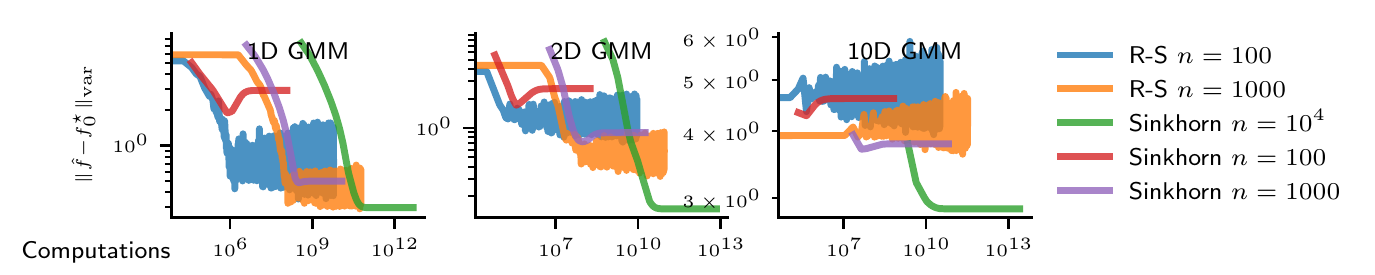}
    
    $\varepsilon = 0.01$

    \includegraphics[width=\linewidth]{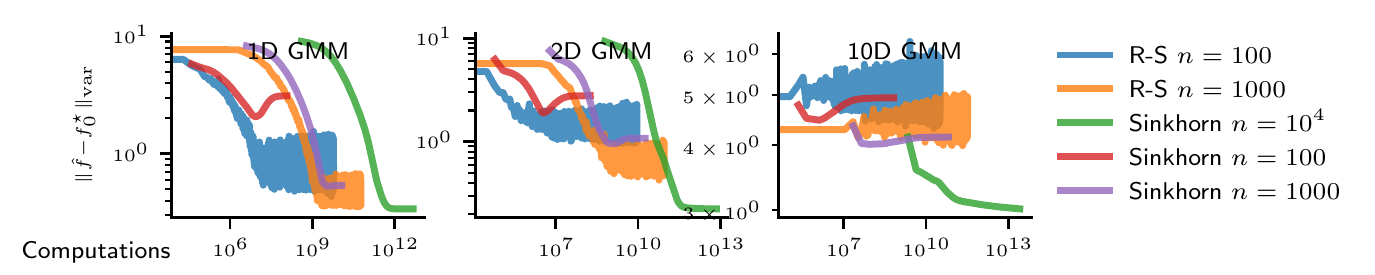}

    $\varepsilon = 0.001$

    \includegraphics[width=\linewidth]{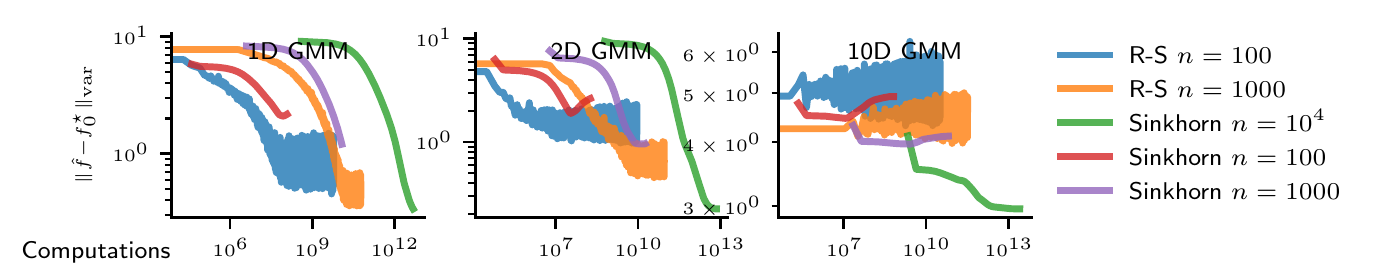}

    $\varepsilon = 0.0001$

    \includegraphics[width=\linewidth]{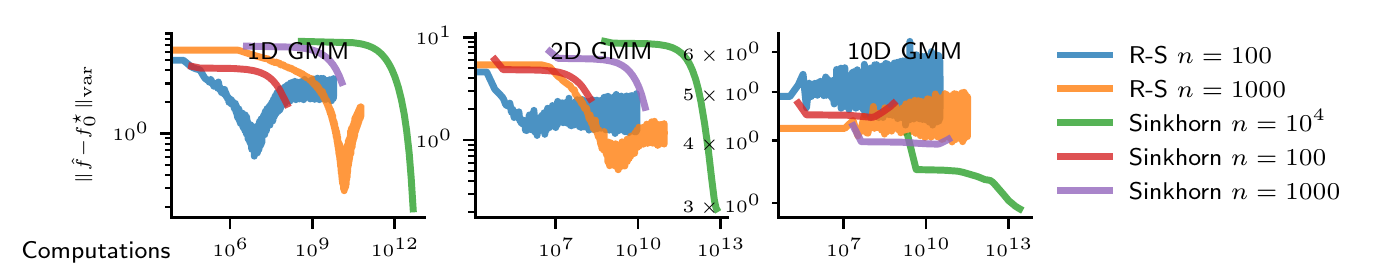}
    \end{widepage}

    \caption{Performance of randomized Sinkhorn (R-S) for various $\varepsilon$.}
    \label{fig:convergence_randomized}
\end{figure}

\paragraph{Grids and details for \autoref{sec:online_exp}.} We set $(\eta_t,
n(t)) = \big(\frac{1}{(1 + 0.1t)^a}, 100 (1 + 0.1t)^{b}\big)$, with $(a, b) =
(0, 2)$, $(a, b) = (\frac{1}{2}, 1)$ and $(a,b =1, 0)$ (constant batch-sizes).
Batch Sinkhorn algorithms uses $N=~100,1000,10000$. We train Sinkhorn on $t =
5000$ iterations, and train online Sinkhorn long enough to match the number of
computations of the large Sinkhorn reference. 

\paragraph{All OS convergence curves.} To complete \autoref{fig:convergence},
\autoref{fig:convergence_all} report the performance of online Sinkhorn for
$\varepsilon \in \{10^{-4}, 10^{-3}, 10^{-2}, 10^{-1}]\}$. The comparison of
performance remains similar to the one produced in the main text.

\paragraph{Fully-corrective online Sinkhorn.}\autoref{fig:convergence_refit} reports the performance of
fully-corrected online Sinkhorn (FCOS). We observe that the fully-corrective scheme is less noisy than the
non-corrected one. It is less efficient than OS on low-dimensional problems,
but faster on the 10 dimensional problem. For GMM-10D, it outperforms the batch Sinkhorn algorithm
with $N=100, 1000$. Note that we interrupt FCOS for $n_t > 20,000$, as our implementation of the $C$-transform has a quadratic memory cost in $n_t$---this cost can be reduced to a linear cost with more careful implementation \footnote{Using e.g. \url{https://www.kernel-operations.io/keops/index.html}}.

\paragraph{Randomized Sinkhorn.}\autoref{fig:convergence_randomized} reports the
performance of randomized Sinkhorn. In low dimension, randomized Sinkhorn is a
reasonable alternative to batch Sinkhorn, as it often outperforms it on average,
for the same memory complexity (compare purple to orange curve for instance). In
high dimension, batch Sinkhorn tend to perform slightly better.

\subsection{OT between Gaussians}

We measure the performance of online Sinkhorn to transport one Gaussian
distribution $\alpha$ to another $\beta$. The potentials $f^\star, g^\star$ are
known exactly for this problem, which allows to have a strong golden standard.
More precisely, adapting the formulae from \cite{janati_entropic_2020}, assuming
$\alpha \sim \Nn(\mu, A)$ and $\beta \sim \Nn(\nu, \beta)$ and writing $I$ the
identity matrix in $\RR^d$, we have
\begin{gather}
    C \triangleq (A B + \frac{\epsilon^2}{4} I)^{1/2}, \quad
    U \triangleq B (C + \frac{\varepsilon}{2} I)^{-1} - I,\quad
    V \triangleq A (C + \frac{\varepsilon}{2} I)^{-1} - I \\
    f^\star: x \to - \frac{1}{2} (x - \mu)^\top U (x - \mu) + x^\top (\mu - \nu) \\
    g^\star: y \to - \frac{1}{2} (y - \nu)^\top V (y - \nu) + y^\top (\nu - \mu) \\
\end{gather}

We compare batch Sinkhorn $(N=100, 1000, 10000)$ to (non fully-corrected) online
Sinkhorn, with $n(t) = B$, and $n(t) = B (1 + 0.1 t)^{1/2}$, $B=100$, and
$\varepsilon \in \{ 10^{-4}, 10^{-3}, 10^{-2}, 10^{-1}\}$.

\begin{figure}[t]
    \begin{widepage}
        \centering
        $\varepsilon = 0.1$
    
        \includegraphics[width=\linewidth]{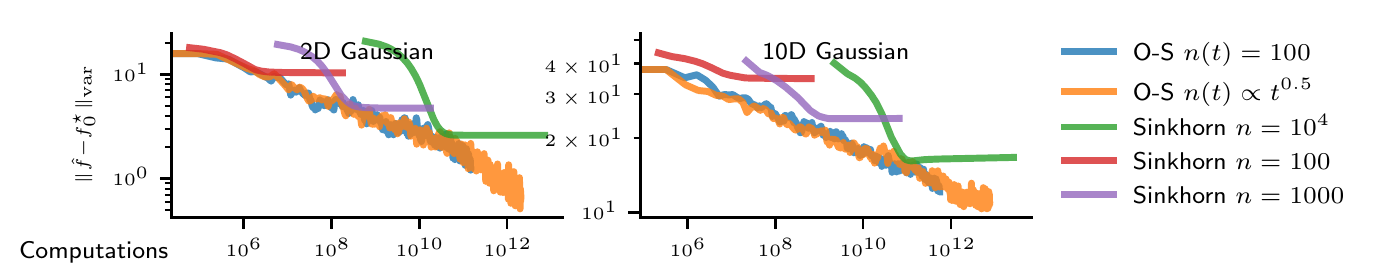}
        
        $\varepsilon = 0.01$

        \includegraphics[width=\linewidth]{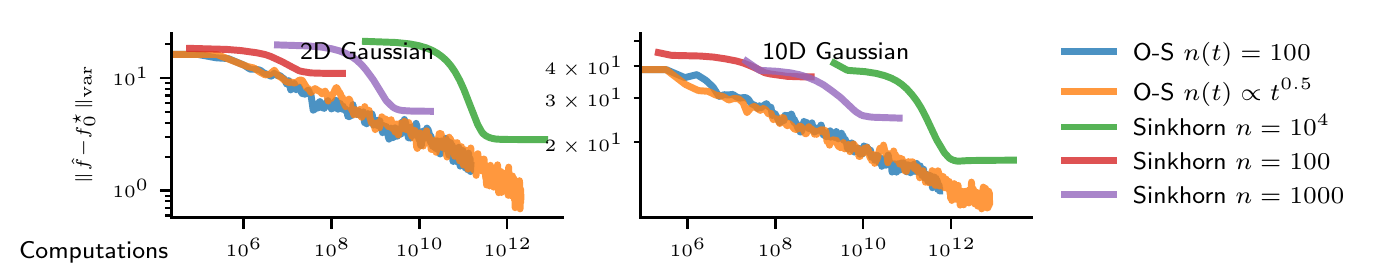}
    
        $\varepsilon = 0.001$
    
        \includegraphics[width=\linewidth]{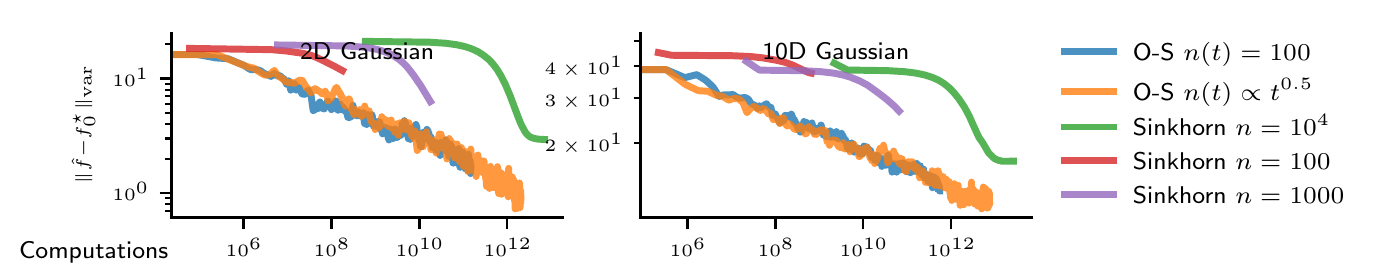}
    
        $\varepsilon = 0.0001$
    
        \includegraphics[width=\linewidth]{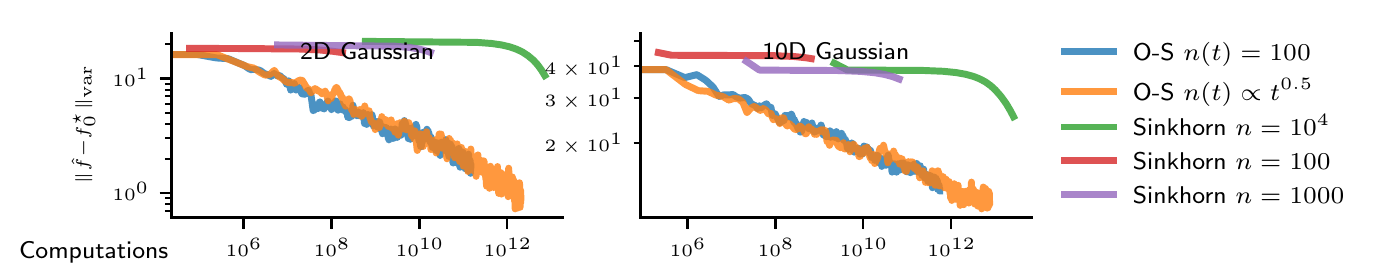}
    \end{widepage}
    \caption{Performance of online-Sinkhorn to estimate OT between two Gaussians. Online Sinkhorn systematically outperforms batch Sinkhorn, but in term of speed and correction.}
    \label{fig:gaussian}
\end{figure}

\begin{figure}[htbp]
    \centering
    
    \includegraphics[width=\linewidth]{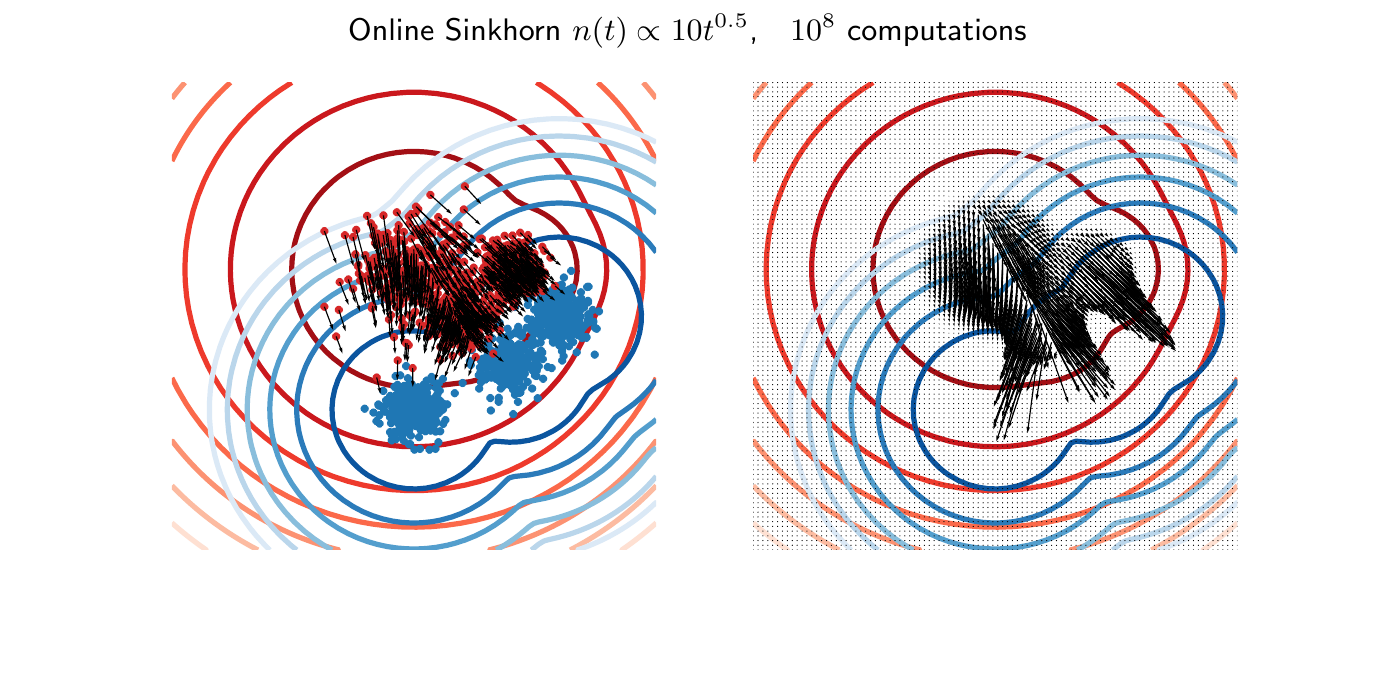}

    \includegraphics[width=\linewidth]{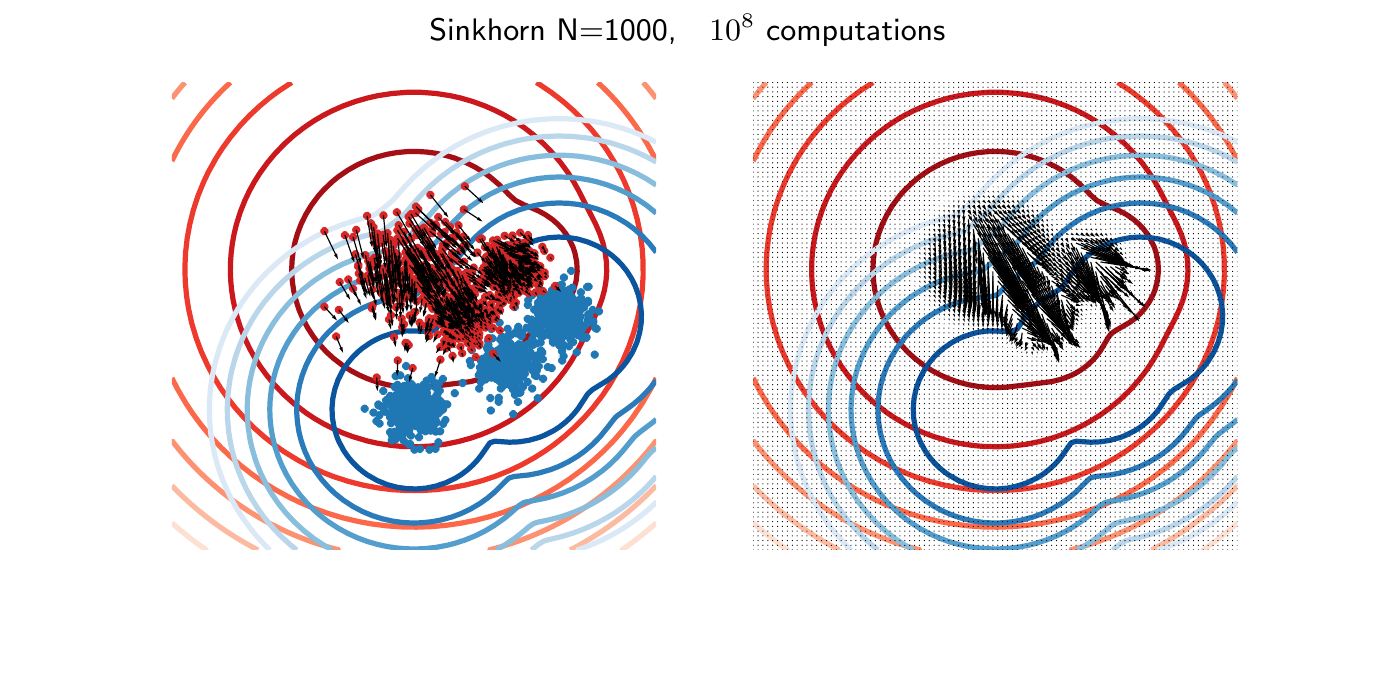}

    \includegraphics[width=\linewidth]{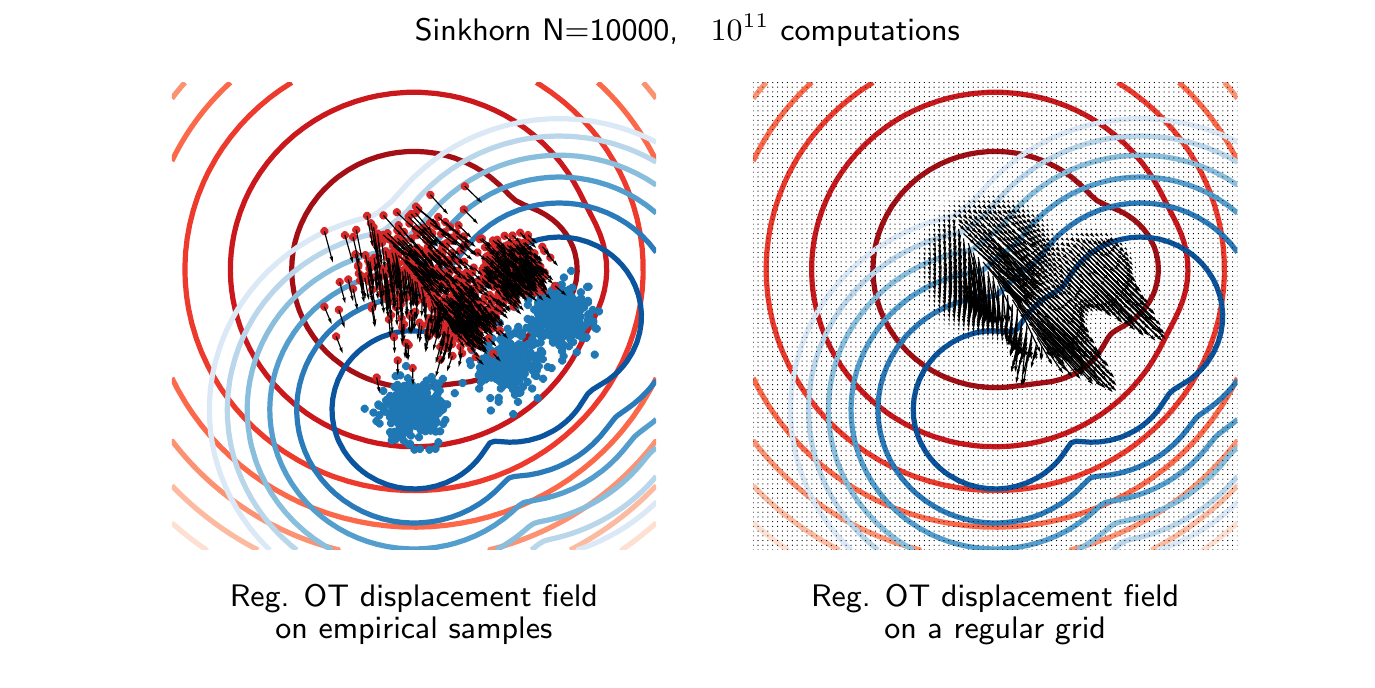}
    
    \caption{Displacement field as defined by the potentials estimated by online-Sinkhorn and Sinkhorn on a 2D GMM. With the same computational budget, online Sinkhorn finds smoother displacement fields than Sinkhorn. Those are closer to the true reference displacement field (we use Sinkhorn on $N=10000$ to estimate this reference). $\alpha$ and $\beta$ log-likelihood level-lines are displayed in red and blue, while the arrows are proportional to $\nabla_x \hat f_t(x) \d \alpha(x)$.}
    \label{fig:potentials_2d}
\end{figure}

\paragraph{Results.} As displayed in \autoref{fig:gaussian},
online Sinkhorn outperforms batch Sinkhorn for all tested batch sizes and all $\varepsilon$. It is faster and does not converge towards biased potentials. This suggests that the performance of online Sinkhorn may be underestimated in the previous analyses due to poor potential reference.

\subsection{Illustration of online Sinkhorn potentials on a 2D GMM}

The estimate $\hat f_t$ is useful to compute the gradient of the Sinkhorn
distance $\Ww(\alpha, \beta)$ with respect to the distribution $\alpha$. This is
useful when $\alpha$ is a parametric distribution $\alpha_\theta$, as it allows
to compute the gradient of the Sinkhorn distance with respect to $\theta$ using
backpropagation. For simplicity, let us assume that $\alpha = \frac{1}{n} \sum_{i=1}^n \delta_{x_i}$. Then, for all $i \in [1, n]$,
\begin{equation}
    \frac{\partial \Ww(\alpha, \beta)}{\partial x_i} = \nabla_x \big(x \to f^\star(\alpha, \beta)\big) (x_i),
\end{equation}
so that $\nabla_x f^\star(\alpha, \beta)$ provides a \textit{displacement field} that can be
descended to minimize $\alpha \to \Ww(\alpha, \beta)$. Such point of view can be
extended to general distributions using the mean-field point of view, see e.g.
\cite{santambrogio2015optimal,chizat2019sparse}.
Estimating $\nabla_x f^\star(\alpha, \beta)$ is therefore crucial to train e.g.
generator networks. Both the online Sinkhorn and the batch Sinkhorn algorithm
allow to estimate this vector field, through the plug-in estimator $x \to
\nabla_x \hat f_t$, easily computed using the form \eqref{eq:param} of $\hat
f_t$. 

\paragraph{Experiment.} With 2D GMMs, we estimate a reference vector field
$\nabla f^\star_0$ using Sinkhorn on $N=10,000$ samples and qualitatively
compare the estimations provided by online Sinkhorn and batch Sinkhorn
$(N=1,000)$, for the same number of computations.

\paragraph{Results.}We represent the estimations $\nabla_x \hat f_t$ in
\autoref{fig:potentials_2d}, for $10^8$ computations. We compare them to a
reference displacement field, estimated wityh $10^10$ computations. We observe
that online Sinkhorn estimates a smoother displacement field than batch Sinkhorn
for the same computational budget, that is closer to the reference displacement
field. In particular, it is less noisy in low-mass areas. This suggest that
online Sinkhorn would be a interesting replacement for batch Sinkhorn in
training generative architectures (used by e.g. \citet{2018-Genevay-aistats}).
$\alpha_\theta$ is then defined as the push-forward of some simple
measure with a neural network $g_\theta$. We leave this direction for future
work.

\subsection{Online Sinkhorn as a warmup process}\label{app:warmup_exp}

\begin{figure}[t]
    \centering
    $\varepsilon = 0.1$

    \includegraphics[width=\linewidth]{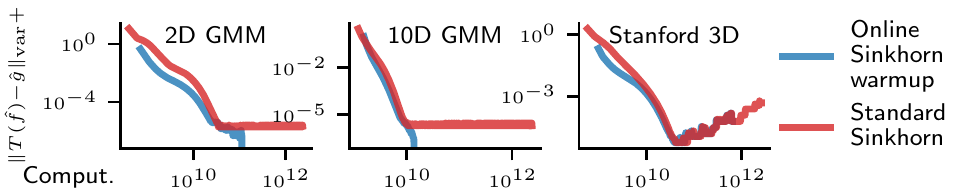}
    
    $\varepsilon = 0.01$

    \includegraphics[width=\linewidth]{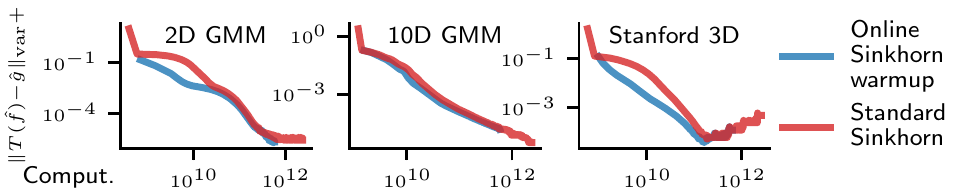}

    $\varepsilon = 0.001$

    \includegraphics[width=\linewidth]{online+full_0_001_err.pdf}

    $\varepsilon = 0.0001$

    \includegraphics[width=\linewidth]{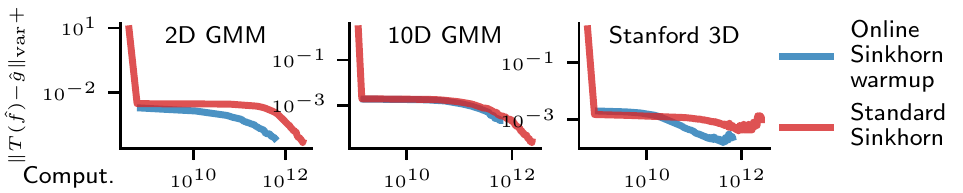}
    \caption{Performance of online-Sinkhorn as warmup for various $\varepsilon$.}
    \label{fig:warmup_full}
\end{figure}

\paragraph{Grids and details for \autoref{sec:accelerating}.} 
We set $(\eta_t, n(t)) = \big(\frac{1}{(1 + 0.1t)^a}, 100 (1 + 0.1t)^{b}\big)$,
with $(a, b) = (0, 2)$, $(a, b) = (\frac{1}{2}, 1)$ and $(a,b =1, 0)$ (constant
batch-sizes). The batch Sinkhorn algorithm that is used for reference and after
warmup uses $N=10000$. In the reference algorithm, we precompute the distance
matrix to save computation. In the warmup algorithm, this distance matrix is
filled progressively and then kept in memory to perform $C$-transforms.

We evaluated OS and fully-corrective OS, and found that fully-corrective was
less efficient (due to its higher cost in the early iterations). We evaluated
sampling with and without replacement in the warmup phase, and found sampling
without replacement to be more efficient.

\paragraph{All warmup convergence curves.} To complete \autoref{fig:warmup}, we
report convergence curves for different $\varepsilon$ in
\autoref{fig:warmup_full}. We find that speed-up increased with $\varepsilon$ and both the 2D and 3D problems, but remains limited for the 10D problem.

\vfill
\pagebreak

%%%%%%%%%%%%%%%%%%%%%%%%%%%%%%%%%%%%%%%%%%%%%%%%%%%%%%%%
\section{Stochastic mirror descent interpretation}
\label{sec-mirror}

The online Sinkhorn can be understood as a stochastic mirror descent algorithm for a non-convex problem.
This equivalence is obtained by applying a change
of variable in \eqref{eq:wass}, defining 
\begin{equation}\label{eq-change-var}
	\mu \triangleq \alpha \exp(f)
	\qandq 
	\nu \triangleq \beta \exp(g). 
\end{equation}
The dual problem~\eqref{eq:dual} 
rewrites as a minimisation problem over positive measures on $\Xx$ and $\Yy$:
\begin{equation}\label{eq:wass_reparam}
    - \!\!\!\!\min_{(\mu,\nu) \in \Mm^+(\Xx)^2} \!\!\!\KL(\alpha | \mu)
    + \KL(\beta | \nu) + \dotp{\mu \otimes \nu}{e^{-C}} - 1,
\end{equation}
where the function $\KL: \Pp(\Xx) \times \Mm^+(\Xx) \triangleq \dotp{\alpha}{ \log \frac{\d \alpha}{\d \mu}}$ is the Kullback-Leibler divergence between
$\alpha$ and $\mu$. 
This objective is block convex in $\mu$, $\nu$, but not jointly convex. 
As we now detail, this problem can be solved using a stochastic mirror descent~\citep{beck2003mirror}, applied here over the Banach space of Radon measures on $\Xx$, equipped with the total variation norm. 

%%%
\paragraph{Mirror maps and gradient.}

For this, we define the (convex) distance generating function $\Mm^+(\Xx)^2 \to \RR$:
\begin{equation}
    \omega(\mu, \nu) \triangleq \KL(\alpha | \mu) + \KL(\beta | \nu).
\end{equation}
The gradient of this function and of its Fenchel conjugate $\omega^\star:
\Cc(\Xx)^2 \to \RR$ yields two \textit{mirror maps}. For all $(\mu, \nu) \in
\Mm^+(\Xx)^2$, $(\varrho, \varphi) \in \Cc(\Xx)^2, \varrho < 0, \varphi < 0$,
\begin{equation}
    \nabla \omega(\mu, \nu) = (- \frac{\d \alpha}{\d \mu}, - \frac{\d \beta}{\d \nu} )
    \qquad \nabla \omega^\star(\varrho, \varphi)
     = (-\frac{\alpha}{\varrho}, -\frac{\beta}{\varphi}).
\end{equation}
The gradient $\nabla F(\mu, \nu)$ of the objective $F$ appearing
in~\eqref{eq:wass_reparam} is a continuous function
\begin{equation}
    \nabla_\mu F(\mu, \nu) = - \frac{1}{\frac{\d\mu}{\d\alpha}} + \int_{y \in \Xx}
    \frac{\d \nu}{\d \beta}(y) \exp(- C(\cdot, y)) \d \beta(y)
\end{equation}
and similarly for $\nabla_\nu F$.

%%%
\paragraph{Stochastic mirror descent.}

To define stochastic mirror descent iterations, we may replace integration over $\beta$ is by an integration over a sampled measure $\hat \beta$. This in turn defines an \textit{unbiased gradient estimate} $\tilde \nabla F$ of $\nabla F$, which has bounded second order moments.
This absence of bias is crucial to prove convergence of SMD with high
probability. Using the mirror maps and the stochastic estimation of the
gradient, one has the following equivalence result, whose proofs stems from
direct computations. 

\begin{proposition}
The stochastic mirror descent iterations
\begin{equation}
    (\mu_t, \nu_t) = \nabla \omega^\star\Big( \nabla \omega(\mu_t, \nu_t) - 
    \eta_t \tilde \nabla F(\mu_t, \nu_t)\Big)
\end{equation}
are equal to the updates~\eqref{eq:updates} under the change of variable~\eqref{eq-change-var}.
\end{proposition}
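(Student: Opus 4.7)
The plan is to verify the equivalence by direct substitution: express each piece of the SMD update ($\nabla\omega$, $\nabla\omega^\star$, and the stochastic gradient $\tilde\nabla F$) in terms of the dual potentials $(f_t,g_t)$ via the change of variable $\mu_t = \alpha\exp(f_t)$, $\nu_t = \beta\exp(g_t)$, then read off the resulting update on $(f_t,g_t)$ and check that it coincides with \eqref{eq:updates}. No convergence theory is involved; this is a componentwise computation, so I would treat the $\mu$-coordinate and $\nu$-coordinate separately and in parallel.

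First I would record the three ingredients under the change of variable. Since $\d\mu_t/\d\alpha = \exp(f_t)$ (and symmetrically for $\nu_t$), the mirror map gives $\nabla\omega(\mu_t,\nu_t) = (-\exp(-f_t),-\exp(-g_t))$, and the conjugate map gives $\nabla\omega^\star(\varrho,\varphi) = (-\alpha/\varrho,-\beta/\varphi)$. For the stochastic gradient, I would replace the integration against $\beta$ (resp.\ $\alpha$) in the formula for $\nabla F$ by integration against the mini-batch $\hat\beta_t$ (resp.\ $\hat\alpha_t$), yielding
\begin{equation}
\tilde\nabla_\mu F(\mu_t,\nu_t)(\cdot) = -\exp(-f_t(\cdot)) + \int_{\Xx} \exp\bigl(g_t(y)-C(\cdot,y)\bigr)\,\d\hat\beta_t(y),
\end{equation}
and analogously for $\tilde\nabla_\nu F$. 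The key observation is that the integral term is precisely $\exp(-\Ctrans{g_t}{\hat\beta_t}(\cdot))$ by the definition of the soft $C$-transform in \eqref{eq:sinkhorn}.

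Next I would assemble the dual step. Combining the two pieces,
\begin{equation}
\nabla\omega(\mu_t,\nu_t) - \eta_t\tilde\nabla F(\mu_t,\nu_t) = \Bigl(-(1-\eta_t)\exp(-f_t) - \eta_t\exp(-\Ctrans{g_t}{\hat\beta_t}),\ -(1-\eta_t)\exp(-g_t) - \eta_t\exp(-\Ctrans{f_t}{\hat\alpha_t})\Bigr),
\end{equation}
and then applying $\nabla\omega^\star$ componentwise gives
\begin{equation}
\mu_{t+1} = \frac{\alpha}{(1-\eta_t)\exp(-f_t) + \eta_t\exp(-\Ctrans{g_t}{\hat\beta_t})},
\end{equation}
with a symmetric expression for $\nu_{t+1}$. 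Defining $f_{t+1}$ through $\mu_{t+1} = \alpha\exp(f_{t+1})$ then forces
\begin{equation}
\exp(-f_{t+1}) = (1-\eta_t)\exp(-f_t) + \eta_t\exp(-\Ctrans{g_t}{\hat\beta_t}),
\end{equation}
which is exactly the online Sinkhorn update in \eqref{eq:updates}, and similarly for $g_{t+1}$.

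The only real subtleties are bookkeeping: I would want to confirm that the simultaneous (rather than alternating) choice of $\tilde\nabla F$ at $(\mu_t,\nu_t)$ matches the simultaneous updates adopted in Section~\ref{sec-online-sink-iter}, and that $\tilde\nabla F$ is indeed an unbiased estimator (which is immediate from $\EE[\hat\Ff_{\hat\alpha,\hat\beta}] = \Ff$ established earlier). There is no real obstacle: once the Radon--Nikodym derivatives are written down, the equivalence is essentially one line per coordinate. The harder content --- justifying why this mirror-descent view yields a well-posed algorithm on the Banach space of Radon measures --- is outside the statement and would only be needed for a convergence analysis in the SMD framework.
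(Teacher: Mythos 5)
Your proposal is correct and is exactly the ``direct computation'' the paper alludes to (the paper gives no explicit proof, stating only that the result ``stems from direct computations''): you substitute the change of variable into $\nabla\omega$, $\nabla\omega^\star$ and $\tilde\nabla F$, identify the integral term with $\exp(-\Ctrans{g_t}{\hat\beta_t})$, and read off the update on $e^{-f_{t+1}}$, which matches \eqref{eq:updates}. Your reading of the left-hand side of the displayed iteration as $(\mu_{t+1},\nu_{t+1})$ is the intended one.
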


%%%
\paragraph{Interpretation.} 

It is important to realize that $\mu_t$ and $\nu_t$ do not need to be stored in memory. Instead,
their associated potentials $f_t$ and $g_t$ are parametrized as
\eqref{eq:param}. In particular, $\mu_t$ and $\nu_t$ remain absolutely
continuous with respect to $\alpha$ and $\beta$ respectively, so that the
Kullbach-Leibler divergence terms are always finite. Note that the mirror descent
we consider operates in an infinite-dimensional space, as in \citet{hsieh2018finding}.

Finally, we mention that  when computing exact gradients (in the absence of noise) and when using constant step-size of
$\eta_t=1$, the algorithm matches exactly Sinkhorn iterations with simultaneous updates of the dual variables. This provides a novel interpretation on the Sinkhorn algorithm, that differs from the usual Bregman projection \citep{benamou2015iterative}, and the related understanding of Sinkhorn as a constant step-size mirror descent on the primal objective~\citep{mishchenko2019sinkhorn} and on a semi-dual formulation~\citep{leger2019sinkhorn}. 

Note that one can not directly apply the proofs of convergence of mirror descent to our problem, as the lack of convexity of problem \eqref{eq:wass_reparam} prevents their use.

\end{document}